\definecolor{shadecolor}{gray}{0.875}
\newtheorem{thrm}{Theorem}[section]
\newtheorem{thrmx}{Theorem}
\newtheorem{lem}[thrm]{Lemma}
\newtheorem{cor}[thrm]{Corollary}
\newtheorem{prop}[thrm]{Proposition}
\theoremstyle{definition}
\newtheorem{defn}[thrm]{Definition}
\newtheorem{exmple}[thrm]{Example}
\newtheorem{rmk}[thrm]{Remark}
\newtheorem{ques}[thrm]{Question}
\DeclareMathOperator{\vol}{vol}
\title{Hodge-index type inequalities, hyperbolic polynomials and complex Hessian equations}
\author{Jian Xiao}
\date{}
\begin{document}
\maketitle

\begin{abstract}
It is noted that using complex Hessian equations and the concavity inequalities for elementary symmetric polynomials implies a generalized form of Hodge index inequality.
Inspired by this result, using G{\aa}rding's theory for hyperbolic polynomials, we obtain a mixed Hodge-index type theorem for classes of type $(1,1)$.
The new feature is that this Hodge-index type theorem holds with respect to mixed polarizations in which some satisfy particular positivity condition, but could be degenerate and even negative along some directions.
\end{abstract}

\tableofcontents

\section{Introduction}
\subsection{Classical Hodge index theorem}
The classical Hodge index theorem for an algebraic surface determines the signature of the intersection pairing on the algebraic curves. More precisely, on the algebraic surface the space spanned by the numerical classes of curves has a one-dimensional subspace (not uniquely determined) on which it is positive definite, and decomposes as a direct sum of some such one-dimensional subspace, and a complementary subspace on which it is negative definite.
In higher dimensions, we have the following Hodge index theorem for $(1,1)$-classes on a compact K\"ahler manifold $X$ of dimension $n$, which is a particular case of the Hodge-Riemann bilinear relations. Let $\omega$ be a K\"ahler class on $X$, then we could define the so called primitive space of $(1,1)$-classes with respect to $\omega$:
\begin{equation*}
  P^{1,1}(X, \mathbb{C})=\{\gamma \in H^{1,1} (X, \mathbb{C})| \omega^{n-1} \cdot \gamma =0\}.
\end{equation*}
It is clear that $P^{1,1}(X, \mathbb{C})$ is a hyperplane in $H^{1,1} (X, \mathbb{C})$. On $H^{1,1} (X, \mathbb{C})$, one has the following quadratic form
\begin{equation*}
  Q(\alpha, \beta) = \alpha \cdot \overline{\beta}\cdot \omega^{n-2}.
\end{equation*}
Then the Hodge index theorem for $(1,1)$-classes says that, for any $\alpha \in P^{1,1}(X, \mathbb{C})$, we have $Q(\alpha, \alpha) \leq 0$. We call this inequality the Hodge-index inequality (with respect to $\omega$). Furthermore, the equality holds if and only if $\alpha =0$.

There are several approaches to the classical Hodge index theorem or the more general Hodge-Riemann bilinear relations. For example, it can be proved by reducing the global case to the local case by harmonic forms (see e.g. \cite{voisinHodge1}), or by using the deep relationship between polarized Hodge-Lefschetz modules and variations of Hodge structures (see \cite{cattanimixedHRR}), or by reducing to the local case by applying the $L^2$-method to solve a $dd^c$-equation (see \cite{DN06}). Note that the later two proofs apply to the more general mixed situation.

\subsection{The main result}
In this note, we present relations between Hodge-index type inequalities, hyperbolic polynomials and the solvability of some PDEs in complex geometry.

As the starting point, we observe that the classical Hodge-index inequality (with respect to $\omega$) follows easily from Yau's solution to complex Monge-Amp\`{e}re equations (i.e., the Calabi-Yau theorem). Yau's theorem enables us to reduce it to the local case, then we could use the concavity or hyperbolicity of determinants.

Note that the complex Monge-Amp\`{e}re equation is essentially a PDE with respect to a K\"ahler class (or a K\"ahler metric) $\omega$. Roughly speaking, the philosophy in the above approach can be stated as follows:
\begin{itemize}
  \item the positivity condition on $\omega$ ensures the solvability of the complex Monge-Amp\`{e}re equation with respect to $\omega$;
  \item we could associate the primitive subspace $P^{1,1}$ to $\omega^{n-1}$ and the quadratic form $Q$ to $\omega^{n-2}$;
  \item combining with the properties of determinants, the signature of $Q$ (or at least the inequality) follows from the solvability of complex Monge-Amp\`{e}re equations.
\end{itemize}

By using the same philosophy, we find that the positivity condition in the solvability of complex Hessian equations works well along the above framework. This enables us to somehow weaken the positivity condition in the definitions of primitive spaces and the corresponding quadratic forms.

Let us recall the positivity condition in complex Hessian equations. Let $\omega$ be a K\"ahler metric on $X$, and let $\widehat{\alpha}$ be a real $(1,1)$-form, then $\widehat{\alpha}$ is called \emph{$m$-positive} ($1\leq m \leq n$) with respect to $\omega$, if the following inequalities hold at every point:
\begin{equation*}
  \widehat{\alpha}^k \wedge \omega^{n-k} >0,\ \forall \ k = 1, 2,...,m.
\end{equation*}
We denote the set of smooth $m$-positive $(1,1)$-forms by $\Gamma_m$. (In order to simplify the notations, we omit the reference K\"ahler metric $\omega$.) $\Gamma_m$ is an open convex cone in the vector space of real $(1,1)$-forms. It is clear that in general we have
\begin{equation}\label{eq pos inclusion}
  \Gamma_n \subsetneq \Gamma_{n-1} \subsetneq ... \subsetneq \Gamma_1.
\end{equation}
It is well-known that $\Gamma_n$ is the set of K\"ahler metrics.
For $(1,1)$-classes, we call a $d$-closed $(1,1)$-class $\alpha$ $m$-positive with respect to the K\"ahler metric $\omega$, if it has an $m$-positive representative in the pointwise sense. We use the same symbol ${\Gamma}_m \subset H^{1,1} (X, \mathbb{R})$ to denote the set of $m$-positive $(1,1)$-classes on $X$. Then it is an open convex cone in $H^{1,1} (X, \mathbb{R})$. The sequence of cones ${\Gamma}_m$ satisfies the same inclusion relation as (\ref{eq pos inclusion}), and $\Gamma_n$ is exactly the K\"ahler cone of $X$.

By using complex Hessian equations and the concavity inequalities for elementary polynomials, we get a generalized form of Hodge index inequalities for $(1,1)$-classes.
Inspired by this result, we find that using G{\aa}rding's theory for hyperbolic polynomials and solving Laplacian equations imply a mixed Hodge-index type theorem.

Let us make a convention: in this paper, we will always use the same symbol $\omega$ to denote the class of a K\"ahler metric or a K\"ahler metric in the class.

Our main result is the following:

\begin{thrmx}\label{thrm main result}
Let $X$ be a compact K\"ahler manifold of dimension $n$, and let $\omega$ be a reference K\"ahler metric. Let $\alpha_1, ...,\alpha_{m-1} \in H^{1,1}(X, \mathbb{R})$, and assume that every $\alpha_j$ is $m$-positive with respect to $\omega$. Denote $\Omega=\alpha_1 \cdot \alpha_2 \cdot ...\cdot \alpha_{m-2}$.
Let
\begin{equation*}
  P^{1,1}(X, \mathbb{C})=\{\gamma \in H^{1,1} (X, \mathbb{C})| \omega^{n-m}\cdot \Omega \cdot \alpha_{m-1} \cdot \gamma =0\}
\end{equation*}
be the primitive subspace defined by $\omega^{n-m}\cdot \Omega \cdot \alpha_{m-1}$.
Then we have:
\begin{itemize}
  \item The quadratic form
\begin{equation*}
  Q(\beta, \gamma) = \beta \cdot \overline{\gamma}\cdot \Omega \cdot \omega^{n-m}.
\end{equation*}
is negative definite on $P^{1,1}(X, \mathbb{C})$.

  \item The space $H^{1,1} (X, \mathbb{C})$ has a direct sum decomposition
  \begin{equation*}
    H^{1,1} (X, \mathbb{C}) = P^{1,1}(X, \mathbb{C}) \oplus \mathbb{C} \alpha_{m-1}.
  \end{equation*}

  \item The map $\omega^{n-m}\cdot \Omega: H^{1,1} (X, \mathbb{C}) \rightarrow H^{n-1,n-1} (X, \mathbb{C})$ is an isomorphism.
\end{itemize}
\end{thrmx}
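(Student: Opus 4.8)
The plan is to follow the philosophy of the introduction: first establish the statement \emph{pointwise} as a Hodge--Riemann bilinear relation for a hyperbolic polynomial, then globalize by solving a linear (Laplacian-type) equation that forces a cohomologically prescribed representative to be pointwise primitive, and finally deduce the decomposition and the isomorphism by elementary linear algebra. Since $\Omega$, $\omega$ and $\alpha_{m-1}$ are real and $Q$ is Hermitian, it suffices to work with real classes: writing $\beta=\beta_1+i\beta_2$ with $\beta_j\in H^{1,1}(X,\mathbb R)$ gives $Q(\beta,\beta)=(\beta_1^2+\beta_2^2)\cdot\Omega\cdot\omega^{n-m}$, and $\beta\in P^{1,1}$ iff both $\beta_1,\beta_2$ are primitive, so the complex statement reduces to the real one.

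Fix smooth $m$-positive representatives $\hat\alpha_j$ of the $\alpha_j$ and set $W=\hat\alpha_1\wedge\cdots\wedge\hat\alpha_{m-2}\wedge\omega^{n-m}$ and $W'=W\wedge\hat\alpha_{m-1}$. The pointwise heart of the matter, which I regard as \emph{the main obstacle}, is this: at each $x\in X$ the symmetric form $b_x(\eta,\zeta)=\eta\wedge\zeta\wedge W/\omega^n$ on real $(1,1)$-forms has exactly one positive eigenvalue (Lorentzian signature), and $b_x(\hat\alpha_{m-1},\hat\alpha_{m-1})>0$. This is precisely the Hodge--Riemann bilinear relation for the hyperbolic polynomial $P(A)=A^m\wedge\omega^{n-m}/\omega^n$, whose Gårding cone is exactly $\Gamma_m$: indeed $b_x$ is the polarization $M_P(\hat\alpha_1,\dots,\hat\alpha_{m-2},\eta,\zeta)$ with its $m-2$ free slots filled by the cone elements $\hat\alpha_1,\dots,\hat\alpha_{m-2}\in\Gamma_m$. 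This is exactly where Gårding's theory enters and where the weakened positivity is exploited: one needs only membership in $\Gamma_m$, not positive definiteness. Granting it, since $\hat\alpha_{m-1}\in\Gamma_m$ is $b_x$-positive, its $b_x$-orthogonal hyperplane is $b_x$-negative definite by the standard linear algebra of Lorentzian forms; and as $\ell(\eta):=\eta\wedge W'/\omega^n=b_x(\eta,\hat\alpha_{m-1})$, that hyperplane is exactly the pointwise primitive space. Hence $b_x$ is negative definite on pointwise primitive forms, strictly so away from $0$.

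To globalize, let $\beta\in H^{1,1}(X,\mathbb R)$ be primitive with smooth representative $\beta_0$. The same Gårding positivity makes $W'$ a strictly positive $(n-1,n-1)$-form, so the operator $L\phi=i\partial\bar\partial\phi\wedge W'/\omega^n$ is second-order elliptic with no zeroth-order term (its symbol $i\zeta\wedge\bar\zeta\wedge W'/\omega^n$ is positive); it is formally self-adjoint because $W'$ is real and $d$-closed, with kernel the constants. By the Fredholm alternative the equation
\begin{equation*}
  i\partial\bar\partial\phi\wedge W'=-\beta_0\wedge W'
\end{equation*}
is solvable iff $\int_X\beta_0\wedge W'=0$, and this compatibility condition is exactly $\beta\cdot\alpha_{m-1}\cdot\Omega\cdot\omega^{n-m}=0$, i.e. primitivity of $\beta$. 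Solving it yields a representative $\tilde\beta=\beta_0+i\partial\bar\partial\phi$ with $\tilde\beta\wedge W'=0$ pointwise, so $\tilde\beta$ is pointwise primitive everywhere, and therefore
\begin{equation*}
  Q(\beta,\beta)=\int_X\tilde\beta\wedge\tilde\beta\wedge W\le 0,
\end{equation*}
with equality iff $\tilde\beta\equiv0$, i.e. iff $\beta=0$. This proves the first bullet.

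The last two statements are then formal. Over $\mathbb R$ the form $b_{\mathrm{coh}}(\delta,\gamma)=\delta\cdot\gamma\cdot\Omega\cdot\omega^{n-m}$ satisfies $b_{\mathrm{coh}}(\alpha_{m-1},\alpha_{m-1})=\int_X\hat\alpha_{m-1}^2\wedge W>0$ (by the pointwise positivity of $b_x(\hat\alpha_{m-1},\hat\alpha_{m-1})$) and $b_{\mathrm{coh}}(\alpha_{m-1},\gamma_0)=\ell_{\mathrm{coh}}(\gamma_0)=0$ for every $\gamma_0\in P^{1,1}$, where $\ell_{\mathrm{coh}}(\gamma)=\gamma\cdot\alpha_{m-1}\cdot\Omega\cdot\omega^{n-m}$. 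In particular $\ell_{\mathrm{coh}}(\alpha_{m-1})>0$, so $\alpha_{m-1}\notin P^{1,1}$, whence $P^{1,1}$ is a hyperplane meeting $\mathbb C\alpha_{m-1}$ trivially and $H^{1,1}(X,\mathbb C)=P^{1,1}\oplus\mathbb C\alpha_{m-1}$ (complexifying the real decomposition). Consequently $b_{\mathrm{coh}}$ is the orthogonal sum of the positive line $\mathbb R\alpha_{m-1}$ and the negative definite hyperplane $P^{1,1}$, hence nondegenerate; since the cup-product pairing $H^{1,1}\times H^{n-1,n-1}\to\mathbb C$ is perfect (so $h^{1,1}=h^{n-1,n-1}$), nondegeneracy of $b_{\mathrm{coh}}$ is equivalent to injectivity, and thus bijectivity, of $\omega^{n-m}\cdot\Omega\colon H^{1,1}(X,\mathbb C)\to H^{n-1,n-1}(X,\mathbb C)$. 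The one genuinely hard ingredient is the pointwise Lorentzian property of $b_x$ under mere $m$-positivity; all the rest is PDE solvability and bookkeeping.
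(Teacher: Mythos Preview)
Your proposal is correct and follows essentially the same route as the paper: G{\aa}rding's theory gives the pointwise Hodge--index inequality (the paper states this as two lemmas, one for strict positivity of $W'$ and one for the Lorentzian signature of $b_x$), then the Laplacian equation against $W'$ produces a pointwise primitive representative and integration gives the global inequality with its equality characterization. The only cosmetic difference is in the third bullet: the paper argues injectivity directly (if $\omega^{n-m}\cdot\Omega\cdot\beta=0$ then $\beta$ is primitive and $Q(\beta,\beta)=0$, so $\beta=0$), whereas you repackage this as nondegeneracy of $b_{\mathrm{coh}}$ plus Poincar\'e duality; both are immediate from the first bullet.
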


If $\alpha_1 =...=\alpha_{m-1}= \omega$, we get the classical Hodge index theorem for $(1,1)$-classes.

\begin{exmple}
An interesting example is given by a holomorphic submersion. Let $f: X\rightarrow Y$ be a holomorphic submersion from a compact K\"ahler manifold $X$ of dimension $n$ to a compact K\"ahler manifold $Y$ of dimension $m$. Let $\omega_X$ be a K\"ahler class on $X$, and let $\omega_{Y_1},...,\omega_{Y_{m-1}}$ be K\"ahler classes on $Y$. Then the Hodge-index type inequalities hold for $f^* \omega_{Y_1},...,f^*\omega_{Y_{m-1}}, \omega_X$. Note that $f^*\omega_{Y_j}$ is degenerate along the fibers. In some sense, this could be considered as a ``relative'' version of the Hodge index theorem.
\end{exmple}

\begin{rmk}
Theorem \ref{thrm main result} indicates that a more general Hodge-Riemann bilinear relations should hold with respect to mixed $m$-positive classes coupled with a K\"ahler class (see Section \ref{sec HRR}). This will be discussed elsewhere.
\end{rmk}

As an immediate corollary of Theorem \ref{thrm main result}, we get the following log-concavity result.

\begin{thrmx}\label{KT m-posiitve0}
Let $X$ be a compact K\"ahler manifold of dimension $n$, and let $\omega$ be a reference K\"ahler metric on $X$. Assume that $\alpha, \beta \in H^{1,1}(X, \mathbb{R})$ are $m$-positive with respect to $\omega$. Denote $a_k = \alpha^k \cdot  \beta^{m-k} \cdot \omega^{n-m}$, where $0\leq k \leq m$. Then the sequence $\{a_k\}$ is log concave, that is, $$a_k ^2 \geq a_{k+1} a_{k-1}$$ for any $1\leq k \leq m-1$. Moreover, $a_k ^2 = a_{k+1} a_{k-1}$ for some $k$ if and only if $\alpha, \beta$ are proportional.
\end{thrmx}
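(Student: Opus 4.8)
The plan is to deduce everything from the $(1,1)$-statement of Theorem~\ref{thrm main result}, applied to a single, carefully chosen polarization. Fix $k$ with $1\le k\le m-1$ and set $\alpha_1=\cdots=\alpha_{k-1}=\alpha$ and $\alpha_k=\cdots=\alpha_{m-1}=\beta$; these are $m-1$ classes, each $m$-positive with respect to $\omega$, so that $\Omega=\alpha_1\cdots\alpha_{m-2}=\alpha^{k-1}\cdot\beta^{m-k-1}$ and
\[
  Q(\mu,\nu)=\mu\cdot\overline{\nu}\cdot\Omega\cdot\omega^{n-m}
           =\mu\cdot\overline{\nu}\cdot\alpha^{k-1}\cdot\beta^{m-k-1}\cdot\omega^{n-m}.
\]
With this choice the three numbers in question are exactly the values of $Q$ on the two real classes $\alpha,\beta$: one checks directly that $Q(\alpha,\alpha)=a_{k+1}$, $Q(\alpha,\beta)=a_k$ and $Q(\beta,\beta)=a_{k-1}$. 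Since $Q$ is a real symmetric form on $H^{1,1}(X,\mathbb{R})$, the desired inequality $a_k^2\ge a_{k+1}a_{k-1}$ is precisely a reverse Cauchy--Schwarz inequality for $Q$ on $\mathrm{span}_{\mathbb{R}}(\alpha,\beta)$, and the proportionality in the equality case will be the rigidity statement of that inequality.

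First I would record the strict positivity $a_j>0$ for all $0\le j\le m$. Choosing $m$-positive representatives $\widehat{\alpha}\in\alpha$, $\widehat{\beta}\in\beta$ and the K\"ahler representative $\omega$, the integrand $\widehat{\alpha}^{\,j}\wedge\widehat{\beta}^{\,m-j}\wedge\omega^{n-m}$ is pointwise a strictly positive multiple of the volume form by G\aa rding's inequality for mixed values in the hyperbolicity cone $\Gamma_m$; integrating gives $a_j>0$. In particular $Q(\alpha,\alpha)=a_{k+1}>0$, so $\alpha$ is a ``timelike'' direction for $Q$.

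Next I would pin down the signature of $Q$. Taking $\alpha_{m-1}=\beta$ (again $m$-positive), Theorem~\ref{thrm main result} asserts that $Q$ is negative definite on the hyperplane $P^{1,1}(X,\mathbb{C})$ and that $\omega^{n-m}\cdot\Omega$ is an isomorphism, whence $Q$ is non-degenerate. A non-degenerate form that is negative definite on a hyperplane has exactly $h-1$ negative eigenvalues, hence at most one positive one, where $h=\dim_{\mathbb{C}}H^{1,1}(X,\mathbb{C})$; combined with $Q(\alpha,\alpha)>0$ this forces the Lorentzian signature $(1,h-1)$. Consequently the $Q$-orthogonal complement $\alpha^{\perp}$ is negative definite. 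Writing $\beta=c\,\alpha+\beta'$ with $\beta'\in\alpha^{\perp}$ and $c=Q(\alpha,\beta)/Q(\alpha,\alpha)$, a one-line expansion yields
\[
  Q(\alpha,\beta)^2-Q(\alpha,\alpha)\,Q(\beta,\beta)=-\,Q(\alpha,\alpha)\,Q(\beta',\beta')\ge 0,
\]
which is exactly $a_k^2\ge a_{k+1}a_{k-1}$. Equality holds if and only if $Q(\beta',\beta')=0$, i.e. $\beta'=0$ by negative definiteness on $\alpha^{\perp}$, i.e. $\alpha$ and $\beta$ are proportional.

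I expect the only point needing genuine care to be the positivity input: the identity $Q(\alpha,\alpha)=a_{k+1}>0$ is what guarantees the existence of a timelike direction, and hence the Lorentzian signature $(1,h-1)$ rather than a globally negative-definite form — this is where G\aa rding's theory genuinely enters. Once that positivity and the signature determination are in hand, both the inequality and its rigidity are the formal reverse Cauchy--Schwarz argument above.
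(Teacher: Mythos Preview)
Your argument is correct and is essentially the paper's own route: the paper deduces from Theorem~\ref{thrm main result} that the quadratic form $Q(\gamma)=\gamma^2\cdot\Omega\cdot\omega^{n-m}$ is complete and hyperbolic on $H^{1,1}(X,\mathbb{R})$ (Theorem~\ref{thrm hyperbolic }), and then invokes G{\aa}rding's inequality, which for a degree-$2$ polynomial is exactly the Lorentzian reverse Cauchy--Schwarz you wrote out. Your explicit choice of $\Omega=\alpha^{k-1}\cdot\beta^{m-k-1}$, the positivity $a_j>0$ via pointwise G{\aa}rding, and the signature count are precisely the details left implicit in the paper.
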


In Section \ref{sec prel}, we give an overview on hyperbolic polynomials and complex Hessian equations. In Section \ref{sec hessian to hodgeindex}, we first present the implication from complex Hessian equations to a form of Hodge index inequalities, which motivates Theorem \ref{thrm main result}. We then prove the main result and give some further discussions around our Hodge-index type theorem.

\subsection*{Acknowledgements}
We would like to thank Jie Liu for some interesting discussions.

\section{Preliminaries}\label{sec prel}

\subsection{Hyperbolic polynomials}\label{sec hyperb}
We give a brief review on hyperbolic polynomials, in particular, G{\aa}rding's concavity inequalities for such polynomials. Our references are \cite[Chapter 2]{hormanderConvexity} and G{\aa}rding's classical paper \cite{gardinghyperbolic}.

Let $V$ be a complex vector space of dimension $n$, and let $P = P(x)$ be a homogeneous polynomial of degree $m>0$ on $V$.

\begin{defn}
Let $a \in V$ be a real vector. We say that $P$ is \emph{hyperbolic} at $a$, if the equation $P(sa +x)=0$ (as a polynomial equation of $s$) has only real roots for every real vector $x$.
\end{defn}

From its definition, the hyperbolicity of $P$ at $a$ implies that $P(a)\neq 0$ and $\frac{P(x)}{P(a)}$ is real whenever $x$ is real. Thus a hyperbolic polynomial is essentially real.

\begin{defn}
The \emph{linearity} $LP$ of $P$ is defined as the set of all $x$ such that $P(tx + y) =P(y)$ for all $t, y$. Then $LP$ is a linear subspace.
We say that $P$ is \emph{complete}, if $LP = \{0\}$.
\end{defn}

For hyperbolic polynomials, we have:

\begin{thrm}
Assume that $P$ is hyperbolic at $a$. Let $\Gamma (P, a)$ be the component containing $a$ of the set
$\{x \in V |\ P(x) \neq 0\}$.
Then $\Gamma (P, a)$ is an open convex cone. For any $b \in \Gamma (P, a)$, $P$ is hyperbolic at $b$ and $\Gamma (P, a) =\Gamma (P, b)$. Moreover, the polynomial $\frac{P(x)}{P(a)} >0$ when $x \in \Gamma (P, a)$, and $\left(\frac{P(x)}{P(a)}\right)^{1/m}$ is concave and homogeneous of degree one on $\Gamma (P, a)$, and is zero on the boundary of $\Gamma (P, a)$.
\end{thrm}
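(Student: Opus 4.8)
The plan is to follow G{\aa}rding's original strategy, organizing everything around the ``eigenvalues'' of $P$ relative to $a$. After replacing $P$ by $-P$ if necessary so that $P(a)>0$ and then dividing by $P(a)$, I may assume $P(a)=1$; this affects neither the hyperbolicity nor the component $\Gamma(P,a)$, and the resulting polynomial is exactly the one appearing in $P(x)/P(a)$. Working with the real points of $V$, hyperbolicity at $a$ gives the factorization
\begin{equation*}
  P(sa+x)=\prod_{j=1}^{m}\bigl(s+\lambda_j(x)\bigr),\qquad \lambda_j(x)\in\mathbb{R},
\end{equation*}
where the roots are homogeneous of degree one, satisfy $\lambda_j(a)=1$, and give $P(x)=\prod_j\lambda_j(x)$. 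The elementary symmetric functions $\sigma_k(\lambda(x))$ are genuine polynomials in $x$ (they are the coefficients in $s$ of the jointly homogeneous polynomial $P(sa+x)$); in particular $\sigma_1(\lambda(x))=\sum_j\lambda_j(x)$ is a \emph{linear} functional of $x$.

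First I would record the elementary properties. Since $\{P\neq0\}$ is open (in the real points of $V$, which are locally connected), its component $\Gamma(P,a)$ is open, and homogeneity of $P$ makes it a cone. Because $P=\prod_j\lambda_j$ vanishes exactly when some $\lambda_j$ does, a path inside $\{P\neq0\}$ from $a$ (where all $\lambda_j=1$) cannot let any $\lambda_j$ change sign; hence all $\lambda_j>0$ on $\Gamma(P,a)$ and $P>0$ there. The same non-vanishing argument shows that any boundary point $x_0\in\partial\Gamma(P,a)$ satisfies $P(x_0)=0$ (otherwise $x_0$ would lie in the open set $\{P\neq0\}$ adjacent to the component, forcing $x_0\in\Gamma(P,a)$), so $(P/P(a))^{1/m}$ extends continuously by $0$ to $\partial\Gamma(P,a)$, giving the stated vanishing on the boundary.

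Next I would prove that $P$ is hyperbolic at every $b\in\Gamma(P,a)$, with $\Gamma(P,b)=\Gamma(P,a)$. For real $w$, hyperbolicity at $b$ is the statement that $\zeta\mapsto P(w+\zeta b)$ has only real zeros; starting from $b=a$ and moving $b$ within $\Gamma(P,a)$, a continuity-of-roots argument (via Hurwitz's theorem) forbids a conjugate pair of real zeros from leaving the real axis without $P$ first vanishing, which is impossible inside $\Gamma(P,a)$. This propagates hyperbolicity throughout $\Gamma(P,a)$ and yields the equality of cones. It also gives, via Rolle's theorem applied to $s\mapsto P(sb+w)$, that for $b\in\Gamma(P,a)$ the directional derivative $\partial_bP$ is hyperbolic at $a$ with $\Gamma(P,a)\subseteq\Gamma(\partial_bP,a)$, its roots interlacing those of $P$.

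The analytic heart, and the step I expect to be the main obstacle, is the \textbf{concavity} of $f=(P/P(a))^{1/m}$. I would obtain it from G{\aa}rding's mixed inequality: writing $M(x_1,\dots,x_m)$ for the symmetric multilinear polarization of $P$, the goal is
\begin{equation*}
  M(x_1,\dots,x_m)\ \ge\ P(x_1)^{1/m}\cdots P(x_m)^{1/m}\ >\ 0,\qquad x_i\in\Gamma(P,a).
\end{equation*}
This I would prove by induction on the degree $m$: fixing $x_m\in\Gamma(P,a)$, the polarization of the lower-degree hyperbolic polynomial $\partial_{x_m}P$ equals $m\,M(\,\cdot\,,\dots,\,\cdot\,,x_m)$ and its cone contains $\Gamma(P,a)$, which sets up the reduction; carrying the interlacing bookkeeping through the induction is the genuine technical core of G{\aa}rding's inequality. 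Granting it, convexity of $\Gamma(P,a)$ is immediate, since expanding $P((1-t)x+ty)$ by multilinearity exhibits it as a sum of positive mixed terms, hence strictly positive for $t\in[0,1]$, so the segment stays in $\{P\neq0\}$ and thus in the component. Concavity then follows from the same expansion together with
\begin{equation*}
  \sum_{k=0}^{m}\binom{m}{k}(1-t)^{k}t^{m-k}P(x)^{k/m}P(y)^{(m-k)/m}=\bigl((1-t)\,P(x)^{1/m}+t\,P(y)^{1/m}\bigr)^{m}
\end{equation*}
after taking $m$-th roots, while homogeneity of degree one for $f$ is built into the construction. The delicate points, where the real work lies, are thus the continuity-of-roots argument propagating hyperbolicity across $\Gamma(P,a)$ and the inductive proof of the mixed inequality.
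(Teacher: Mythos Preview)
The paper does not prove this theorem. It is stated in the preliminaries without proof, with a pointer to H\"ormander's \emph{Notions of Convexity} and G{\aa}rding's original paper; the paper only \emph{uses} the result (and the companion Theorems on $P_l$ and the G{\aa}rding inequality) later on. So there is no in-paper argument to compare your proposal against, and your outline is essentially the classical G{\aa}rding route that those references contain.

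That said, one step of your sketch is not right as written. In the propagation of hyperbolicity you say that, moving $b$ inside $\Gamma(P,a)$, Hurwitz ``forbids a conjugate pair of real zeros from leaving the real axis without $P$ first vanishing, which is impossible inside $\Gamma(P,a)$.'' But when two real zeros of $\zeta\mapsto P(w+\zeta b)$ coalesce at some $\zeta_0\in\mathbb{R}$, what vanishes is $P(w+\zeta_0 b)$, i.e.\ $P$ at the point $w+\zeta_0 b$, not at $b$; nothing prevents $w+\zeta_0 b$ from lying outside $\Gamma(P,a)$, so the stated obstruction does not bite. G{\aa}rding's actual mechanism is different: one perturbs into the complex direction, studying $\zeta\mapsto P\bigl(w+\zeta(b+i\varepsilon a)\bigr)$ for $\varepsilon>0$, uses hyperbolicity at $a$ to show these polynomials have no \emph{real} zeros, and then a continuity/Hurwitz argument in $\varepsilon$ (not in $b$) forces all zeros to lie in a single half-plane, hence to be real in the limit $\varepsilon\to 0^+$. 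You correctly flag this as a delicate point; it just needs the right deformation.

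A smaller organizational remark: you derive concavity of $P^{1/m}$ from the polarized G{\aa}rding inequality. That is legitimate, but note that in the paper's presentation the G{\aa}rding inequality is a \emph{later} theorem, and in H\"ormander the usual order is the reverse: one first proves concavity (equivalently, superadditivity of the smallest eigenvalue $\min_j\lambda_j(x)$, which also yields convexity of $\Gamma(P,a)$ directly), and then deduces the mixed inequality. Either order works, but if you follow the paper's sequencing you should expect to prove concavity without invoking the inequality.
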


We call $\Gamma (P, a)$ the positive cone associated to $P$.

Let $P(x^1, ..., x^m)$ be the completely polarized form of the polynomial $P$, that is,
\begin{equation*}
  P(x^1, ..., x^m) = \frac{1}{m!} \prod_k (\sum_j x_j ^k \frac{\partial}{\partial x_j}) P(x),
\end{equation*}
where the $x^k= (x_1 ^k, x_2 ^k, ..., x_n ^k) \in V$. The completely polarized form is characterized by being linear in each argument, invariant under permutations and satisfying $P(x,...,x) =P(x)$. It is clear that, for any fixed $x^1, ...,x^l \in V$ $(l<m)$, $P(x^1,...,x^l, x,...,x)$ is a homogeneous polynomial of degree $m-l$. Furthermore, we also have:

\begin{thrm}\label{thrm garding positive}
Assume that $P$ is hyperbolic at $a$. Then for any fixed $b^1, ...,b^l \in \Gamma (P, a)$ $(l<m)$, $P_l (x):=P(b^1,...,b^l, x,...,x)$ is hyperbolic at $a$ and $\Gamma (P, a) \subset \Gamma (P_l, a)$. Moreover, if $P$ is complete at $a$, then $P_l$ is also complete at $a$, when $m-l \geq 2$. If $P$ is complete, then $P(x^1, ...,x^m)>0$ for any $x^1 \in \overline{\Gamma (P, a)}\setminus \{0\}, x^2,...,x^m \in \Gamma (P, a)$.
\end{thrm}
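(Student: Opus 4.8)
The plan is to reduce everything to the behaviour of a single directional derivative and then, for the positivity statement, to the classical quadratic (Lorentzian) case. The starting observation is the identity
\[
  P\bigl(b^1,\dots,b^l,\underbrace{x,\dots,x}_{m-l}\bigr) = \frac{(m-l)!}{m!}\, D_{b^1}\cdots D_{b^l} P(x),
\]
where $D_b = \sum_j b_j \partial_{x_j}$ is the directional derivative; thus $P_l$ is, up to a positive constant, an iterated directional derivative of $P$ in directions lying in $\Gamma(P,a)$. Hence the first assertion follows by iterating the single-derivative statement: if $b\in\Gamma(P,a)$ then $D_bP$ is hyperbolic at $a$ and $\Gamma(P,a)\subseteq\Gamma(D_bP,a)$.

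To prove this single-derivative statement I would first invoke the preceding theorem: since $b\in\Gamma(P,a)$, $P$ is hyperbolic at $b$ and $\Gamma(P,a)=\Gamma(P,b)$. For every real $x$ the univariate polynomial $s\mapsto P(x+sb)$ then has only real roots, so by Rolle's theorem its $s$-derivative $s\mapsto (D_bP)(x+sb)$ also has only real roots; this is exactly hyperbolicity of $D_bP$ at $b$, and Euler's identity gives $D_bP(b)=mP(b)\neq0$. For the cone inclusion, take $x\in\Gamma(P,b)$; by convexity $x+sb\in\Gamma(P,b)$ for all $s\geq0$, so $P(x+sb)>0$ there and all roots are negative, say $P(x+sb)=P(b)\prod_i(s+c_i)$ with $c_i>0$. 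Differentiating at $s=0$ gives $D_bP(x)=P(b)\sum_k\prod_{i\ne k}c_i>0$. Thus $D_bP>0$ on the connected set $\Gamma(P,a)$, which forces $\Gamma(P,a)\subseteq\Gamma(D_bP,a)$. Iterating proves the first bullet, and the same positivity computation carried down to the linear form $P_{m-1}$ yields $P(x^1,\dots,x^m)>0$ whenever all $x^i\in\Gamma(P,a)$.

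For the completeness statement I would use the characterization $LP=\{v: P(v,y^2,\dots,y^m)=0 \text{ for all } y^2,\dots,y^m\}$ (equivalently, the lineality of the open convex cone $\Gamma(P,a)$) and argue by induction on $m$. After one differentiation the polarized form of $D_bP$ is $m\,P(b,\cdot,\dots,\cdot)$, so $L(D_bP)=\{v:P(b,v,y^2,\dots,y^{m-1})=0\ \forall y\}\supseteq LP$, and the task is the reverse inclusion when $\deg D_bP=m-1\geq2$. The degree hypothesis $m-l\geq2$ is essential here, since a nonzero linear form is never complete in dimension $>1$. I expect this propagation of completeness — equivalently, that the enlarged cone $\Gamma(D_bP,a)\supseteq\Gamma(P,a)$ stays pointed — to be the main obstacle, as it is where the rigidity of hyperbolic polynomials, rather than a formal manipulation, is genuinely used.

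Finally, granting completeness of the intermediate derivatives, I would obtain the strict boundary positivity by reduction to the quadratic case. Fixing $x^3,\dots,x^m\in\Gamma(P,a)$, the polynomial $R(z)=P(z,z,x^3,\dots,x^m)$ is, by the first two parts, a \emph{complete} hyperbolic polynomial of degree $2$, so its associated symmetric bilinear form $B$ has Lorentzian signature $(+,-,\dots,-)$, and $\Gamma(P,a)\subseteq\Gamma(R,a)$ lies in one nappe of the corresponding cone. Since $P(x^1,x^2,x^3,\dots,x^m)=B(x^1,x^2)$ with $x^2\in\Gamma(R,a)$ timelike and $x^1\in\overline{\Gamma(R,a)}\setminus\{0\}$, the reverse Cauchy--Schwarz inequality for Lorentzian forms gives $B(x^1,x^2)>0$, which is precisely the asserted strict inequality; completeness is exactly what guarantees nondegeneracy of $B$, and hence strictness rather than mere nonnegativity.
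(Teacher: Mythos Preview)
The paper does not prove this theorem: it is quoted in the preliminaries (Section~\ref{sec hyperb}) from G\aa rding's paper and H\"ormander's \emph{Notions of Convexity}, so there is no in-paper argument to compare your proposal against.

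Your outline is essentially the classical G\aa rding--H\"ormander proof. The polarization/directional-derivative identity, the Rolle interlacing step giving hyperbolicity of $D_bP$ at $b$ (hence at $a$ via the preceding theorem), the factorisation $P(x+sb)=P(b)\prod_i(s+c_i)$ yielding $D_bP>0$ on $\Gamma(P,a)$ and thus the cone inclusion, and the reduction of the boundary positivity to a Lorentzian quadratic form are all correct and standard. You rightly single out the propagation of completeness as the one substantive step you have not supplied; your boundary-positivity argument genuinely depends on it, since completeness of the degree-two polynomial $R$ is exactly what forces the bilinear form $B$ to be nondegenerate of signature $(1,n-1)$, and without that the reverse Cauchy--Schwarz step degenerates to a nonstrict inequality. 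That missing step is carried out in the cited references, so once you import it your argument is complete and coincides with the textbook one.
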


Now we could state the well known G{\aa}rding's inequality for hyperbolic polynomials.

\begin{thrm}\label{thrm garding ineq}
Assume that $P$ is hyperbolic at $a$. Then for any $x^1, ...,x^m \in \Gamma (P, a)$, we have
\begin{equation}\label{eq garding ineq}
  P(x^1,...,x^m) \geq P(x^1)^{1/m}\cdot...\cdot P(x^m)^{1/m}
\end{equation}
with equality if and only if the $x^j$ are pairwise proportional modulo $LP$.
In particular, if $P$ is complete, then the equality in (\ref{eq garding ineq}) holds if and only if the $x^j$ are pairwise proportional.
\end{thrm}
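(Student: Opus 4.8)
The plan is to establish the inequality first for the special ``one-against-many'' configuration and then bootstrap to the general mixed form by induction on the degree $m$, treating the equality statement separately at the end. For the base case I claim that, for all $x, y \in \Gamma(P,a)$,
$$P(y, x, \ldots, x) \ge P(x)^{(m-1)/m}\, P(y)^{1/m}. \qquad (\star)$$
Normalizing so that $P(a)=1$ (so that $P>0$ on $\Gamma(P,a)$), set $g = P^{1/m}$, which by the theorem on $\Gamma(P,a)$ is concave and homogeneous of degree one on the convex cone $\Gamma(P,a)$, and is smooth there since $P>0$. The support (gradient) inequality for a differentiable concave function reads $g(y) \le g(x) + D_{y-x}\, g(x)$, where $D_v$ denotes the directional derivative in direction $v$. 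Euler's identity for the degree-one homogeneous $g$ gives $D_x\, g(x) = g(x)$, so the inequality collapses to $g(y) \le D_y\, g(x)$. Evaluating $D_y\, g(x)$ by the chain rule, together with the polarization identity $D_y P(x) = m\, P(y, x, \ldots, x)$, turns $g(y) \le D_y\, g(x)$ into exactly $(\star)$.

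Next I would run an induction on the degree to pass from $(\star)$ to the full inequality, the statement being understood to range over all hyperbolic polynomials of a given degree (degree $1$ is trivial and degree $2$ is $(\star)$ itself). Fix $x^1, \ldots, x^m \in \Gamma(P,a)$ and consider $P_1(x) := P(x^1, x, \ldots, x)$. By Theorem \ref{thrm garding positive} this is hyperbolic at $a$ of degree $m-1$ with $\Gamma(P,a) \subseteq \Gamma(P_1, a)$, so the induction hypothesis applies to $P_1$ and its own polar form. Since the polar form of $P_1$ coincides with that of $P$ once one slot is fixed equal to $x^1$ (by uniqueness of the polarization), one obtains $P(x^1, \ldots, x^m) = P_1(x^2, \ldots, x^m) \ge \prod_{i=2}^{m} P_1(x^i)^{1/(m-1)}$; bounding each diagonal value $P_1(x^i) = P(x^1, x^i, \ldots, x^i)$ from below by $(\star)$ and collecting the resulting exponents yields $P(x^1, \ldots, x^m) \ge \prod_{i=1}^{m} P(x^i)^{1/m}$, which closes the induction.

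Finally, for the equality statement I would first reduce to the complete case by passing to the quotient $V/LP$, on which $P$ descends to a complete hyperbolic polynomial with the image of $\Gamma(P,a)$ as its cone. To analyze equality in the base case I would restrict $P$ to the plane spanned by two vectors $x, y \in \Gamma(P,a)$: the one-variable polynomial $s \mapsto P(x + s\, y)$ is real-rooted by hyperbolicity, so $(\star)$ becomes the classical arithmetic--geometric mean inequality for its (positive) roots, whose equality case forces all roots to coincide, i.e. $x$ and $y$ proportional modulo $LP$. Propagating this two-vector rigidity through the inductive step, and using the strict positivity $P(x^1, \ldots, x^m) > 0$ from Theorem \ref{thrm garding positive} in the complete case to exclude degenerate cancellations, should give pairwise proportionality of all the $x^j$ modulo $LP$. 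I expect this equality analysis — in particular carrying the rigidity from the two-vector case cleanly through the induction — to be the main obstacle; the inequality itself is a fairly direct consequence of the concavity of $P^{1/m}$ combined with Theorem \ref{thrm garding positive}.
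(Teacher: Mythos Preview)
The paper does not prove this theorem. It is quoted in Section~\ref{sec hyperb} as a classical result of G{\aa}rding, with references to \cite{gardinghyperbolic} and \cite[Chapter~2]{hormanderConvexity}, and is used as a black box thereafter. There is therefore no proof in the paper to compare your argument against.

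That said, your argument for the inequality is correct and is essentially the standard one found in those references: the ``one-against-many'' bound $(\star)$ follows from concavity of $P^{1/m}$ via the gradient inequality and Euler's identity exactly as you write, and the induction through $P_1(x)=P(x^1,x,\ldots,x)$ (hyperbolic of degree $m-1$ by Theorem~\ref{thrm garding positive}) combined with $(\star)$ applied to each $P_1(x^i)$ telescopes to the right exponents. Your caution about the equality case is well placed. The AM--GM step on the roots of $s\mapsto P(x+sy)$ shows that equality in $(\star)$ forces all roots to coincide at some $-t$, i.e.\ $P\bigl((x-ty)+sy\bigr)=P(y)\,s^m$; but this alone only says that $x-ty$ has all roots zero \emph{in the direction $y$}, which is not yet $x-ty\in LP$. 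The missing ingredient is the characterization $LP=\overline{\Gamma(P,a)}\cap\bigl(-\overline{\Gamma(P,a)}\bigr)$ (the ``edge'' of the cone): a vector with all roots zero relative to some $y\in\Gamma(P,a)$ lies in both $\overline{\Gamma}$ and $-\overline{\Gamma}$, hence in $LP$. With that in hand, propagating rigidity through the induction is routine, though one must note that the inclusion $\Gamma(P,a)\subseteq\Gamma(P_1,a)$ may be strict, so proportionality modulo $LP_1$ has to be upgraded back to proportionality modulo $LP$ using $(\star)$ once more.
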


Next we present the well known applications of G{\aa}rding's theory to elementary symmetric polynomials.
Recall that the $m$-th elementary symmetric polynomial $\sigma_m$ of $n$ variables is defined by
\begin{equation*}
  \sigma_m (\lambda_1,...,\lambda_n) = \sum_{1\leq i_1 <...<i_m \leq n} \lambda_{i_1}\cdot...\cdot \lambda_{i_m}.
\end{equation*}

\begin{exmple}\label{exmple symmetric}
\begin{enumerate}
  \item The elementary symmetric polynomial $\sigma_m (\lambda)$ is hyperbolic at $(1,...,1)$ and complete, and the corresponding positive cone $\Gamma_m$ is given by
      \begin{equation*}
        \Gamma_m = \{x\in \mathbb{R}^n | \sigma_l (x)>0,\ \forall 1\leq l\leq m\}.
      \end{equation*}
  \item Let $A$ be an $n \times n$ Hermitian matrix, then $\sigma_m (A)$ is defined to be $\sigma_m (\lambda(A))$, where $\lambda(A) = (\lambda_1 (A),...,\lambda_n (A))$ is the list of eigenvalues of $A$. As a polynomial on the space of $n\times n$ Hermitian matrices $\mathcal{H}$, $\sigma_m$ is hyperbolic at the identity matrix and complete, and the corresponding positive cone $\Gamma_m$ is given by
      \begin{equation*}
        \Gamma_m = \{M\in \mathcal{H} | \sigma_l (M)>0,\ \forall 1\leq l\leq m\}.
      \end{equation*}

\item In our setting, it is convenient to translate the above examples to real $(1,1)$-forms. Let $$\omega = i \sum_{1\leq j,k \leq n} \omega_{jk} dz^j \wedge d\bar{z}^k$$ be a fixed strictly positive $(1,1)$-form with constant coefficients on $\mathbb{C}^n$, i.e., $[\omega_{jk}]$ is a positive definite Hermitian matrix. Denote the space of real $(1,1)$-form on $\mathbb{C}^n$ with constant coefficients by
$\Lambda^{1,1} _\mathbb{R} (\mathbb{C}^n)$.
Let $\widehat{\alpha} \in \Lambda^{1,1} _\mathbb{R} (\mathbb{C}^n)$, then $\sigma_m (\widehat{\alpha})$ is defined by
\begin{equation*}
  \sigma_m (\widehat{\alpha}) = \widehat{\alpha}^m \wedge \omega^{n-m}.
\end{equation*}
Then we have: $\sigma_m$ is hyperbolic at $\omega$ and complete, and the corresponding positive cone $\Gamma_m$ is given by
\begin{equation*}
\Gamma_m = \{\widehat{\alpha}\in \Lambda^{1,1} _\mathbb{R} (\mathbb{C}^n) | \sigma_l (\widehat{\alpha})>0,\ \forall 1\leq l\leq m\}.
\end{equation*}

\end{enumerate}
\end{exmple}

\subsection{Complex Hessian equations}
In this section, we give some discussions on $m$-positivity and complex Hessian equations.
We assume that $X$ is a compact K\"ahler manifold of dimension $n$. Let $\omega$ be a K\"ahler metric on $X$. In order to introduce complex Hessian equations, let us recall from the introduction:

\begin{defn}
Let $\widehat{\alpha}$ be a smooth real $(1,1)$-form on $X$, then $\widehat{\alpha}$ is called $m$-positive with respect to $\omega$ if $\widehat{\alpha}^k \wedge \omega^{n-k} >0$ for every $1\leq k \leq m$ and every point on $X$.
\end{defn}

From its definition, it is clear that $m$-positivity is defined by the positive cone associated to the hyperbolic polynomial $\sigma_m$.

\begin{exmple}
By considering the linear case, i.e., $m$-positivity $(m<n)$ on a torus, it can be seen that an $m$-positive $(1,1)$-form can be degenerate and even negative along some directions.
\end{exmple}

\begin{defn}
Let $\alpha \in H^{1,1} (X, \mathbb{R})$. We call $\alpha$ $m$-positive with respect to the K\"ahler metric $\omega$, if $\alpha$ has an $m$-positive representative.
\end{defn}

The positive cone $\Gamma_n$ of $(1,1)$-classes is exactly the K\"ahler cone, thus by \cite{DP04} we have a numerical characterization of $\Gamma_n$. For general $\Gamma_m \subset H^{1,1} (X, \mathbb{R})$ $(1< m <n)$, it is unclear what kind of numerical conditions would imply the existence of $m$-positive forms.

\begin{rmk}
For $m=1$, it is easy to see that $\alpha$ is 1-positive if and only if $\int \alpha \wedge \omega^{n-1} >0$. Let $\widehat{\alpha}$ be a smooth representative of $\alpha$, and let $\Phi >0$ be a smooth volume form satisfying $$\int \Phi = \int \alpha \wedge \omega^{n-1}.$$
Then by the solvability of Laplacian equations, we could find a smooth function $\phi$ satisfying $$(\widehat{\alpha} + dd^c \phi)\wedge \omega^{n-1} = \Phi >0.$$
Thus $\widehat{\alpha} + dd^c \phi$ is 1-positive with respect to $\omega$. As a byproduct, let $\omega + dd^c \psi$ be a K\"ahler metric in the same class of $\omega$, then the $(1,1)$-class $\alpha$ is 1-positive with respect to $\omega$ if and only if it is 1-positive with respect to $\omega + dd^c \psi$. It is unclear to us if this holds for general $m$-positivity:

\begin{ques}
Assume that the $d$-closed $(1,1)$-form $\widehat{\alpha}$ is $m$-positive $(1< m<n)$ with respect to $\omega$, and let $\omega + dd^c \psi$ be another K\"ahler metric. Then does there exist a smooth function $\phi$ such that $\widehat{\alpha} + dd^c \phi$ is $m$-positive with respect to $\omega + dd^c \psi$?
\end{ques}

\end{rmk}

At last, let us recall the following fundamental result.
Suppose that $\widehat{\alpha}$ is a $d$-closed $m$-positive $(1,1)$-form, then for any strictly positive volume form $\Phi$ satisfying
$$\int_X \Phi = \int_X \widehat{\alpha}^m \wedge \omega^{n-m},$$
one could solve the following equation (see \cite{dinewHessian}):
\begin{equation*}
  (\widehat{\alpha} + dd^c \phi)^m \wedge \omega^{n-m} = \Phi
\end{equation*}
such that $\widehat{\alpha} + dd^c \phi$ is an $m$-positive $(1,1)$-form. In particular, when $m=n$ this is exactly Yau's solution to the complex Monge-Amp\`{e}re equations (see \cite{Yau78}).

\begin{rmk}
The above $m$-positivity is defined with respect to a single K\"ahler metric $\omega$. More generally, one could also consider a mixed version of $m$-positivity. Let $\omega_1, ..., \omega_{n-m} \in \Lambda_\mathbb{R} ^{1,1} (\mathbb{C}^n)$ be K\"ahler metrics, we call a form $\widehat{\alpha} \in \Lambda_\mathbb{R} ^{1,1} (\mathbb{C}^n)$ $m$-positive with respect to $\omega_1, ..., \omega_{n-m}$, if
\begin{equation*}
 \widehat{ \alpha}^k \wedge \omega_{i_1}\wedge...\wedge \omega_{i_{n-k}} >0
\end{equation*}
for any $1\leq k \leq m$ and any $1 \leq i_j \leq n-m$. A similar complex Hessian equation could also be proposed with respect to this mixed version of $m$-positivity. It might have some applications for this mixed $m$-positivity.
\end{rmk}

\section{Hodge-index type inequalities}\label{sec hessian to hodgeindex}
\subsection{Motivation results}
We first give a brief explanation on how complex Monge-Amp\`{e}re equations can be used to deduce the classical Hodge index inequalities, by using concavity.

For simplicity, we focus on the surface case. We start with the concavity property of determinants. Consider the function
\begin{equation*}
f(t) = \det (A+ tB)^{1/2},\ t> 0,
\end{equation*}
where $A, B$ are two positive definite Hermitian $2\times 2$ matrices. It is well known that $f(t)$ is concave. Then a direct calculation on $f''(t)$ and taking limit $t \rightarrow 0$ show that
\begin{equation*}
  \det(A, B)^2 \geq \det(A) \det(B),
\end{equation*}
where $\det(A,B)$ is the mixed determinant of $A, B$. It can be also shown that the equality holds if and only if $A = cB$ for some constant $c>0$.
Let $X$ be a compact K\"ahler surface, and let $\omega, \alpha$ be two K\"ahler classes on $X$. In \cite{gromov1990convex} and \cite{Dem93},
it was noted that applying directly Yau's solution to the Calabi conjecture \cite{Yau78} implies the concavity of $t\mapsto \vol(\omega + t \alpha)^{1/2}, t>0$. Similar to the above linear case, this yields
the following inequality:
\begin{equation}\label{eqhodgeindex surface}
  (\omega\cdot \alpha)^2 \geq \omega^2 \alpha^2.
\end{equation}
The equality holds if and only if $\omega, \alpha$ are proportional.
In their arguments, the complex Monge-Amp\`{e}re equation is used to reduce the global case to the local case.
For any $\beta \in H^{1,1}(X, \mathbb{R})$, by considering the quadratic form $Q(\beta) = (\omega \cdot \beta)^2 - \omega^2 \beta^2$, it is not hard to see the above inequality implies that: if $\omega \cdot \beta=0$, then $\beta^2 \leq 0$ with equality if and only if $\beta =0$. This is exactly the Hodge index theorem on a surface.

By similar arguments, the classical Hodge index theorem for $(1,1)$-classes (at least the inequalities) in higher dimensions can be also proved.

More generally, we show how complex Hessian equations can be used to get a generalized form of Hodge index inequalities for $(1,1)$-classes, which looks interesting and 
serves as the motivation for our main result -- Theorem \ref{thrm main result}.

In the following, we fixed a K\"ahler metric $\omega$ on $X$. We denote the set of real $(1,1)$-classes which have $m$-positive smooth representatives with respect to $\omega$ by $\Gamma_m$. Then $\Gamma_m \subset H^{1,1}(X, \mathbb{R})$ is an open convex cone. Recall from the introduction that we use the same symbol $\omega$ to denote a K\"ahler metric or a K\"ahler class.

\begin{lem}\label{lem concave ineq}
Let $\sigma_m (\alpha) = \alpha^m \cdot \omega^{n-m}$, where $\alpha\in H^{1,1}(X, \mathbb{R})$. Let $\sigma_m (\alpha_1,...,\alpha_m)=\alpha_1 \cdot ...\cdot \alpha_m \cdot \omega^{n-m}$ be the complete polarization of $\sigma_m$.
Then we have:
\begin{itemize}
  \item For any $\alpha_1,...,\alpha_m \in \Gamma_m$,
  \begin{equation*}
    \sigma_m (\alpha_1,...,\alpha_m) \geq \sigma_m ^{1/m} (\alpha_1)...\sigma_m ^{1/m}(\alpha_m)
  \end{equation*}
  with the equality holds if and if the $\alpha_k$ are pairwise proportional.
\end{itemize}
\end{lem}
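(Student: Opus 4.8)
The plan is to follow precisely the philosophy advertised in the introduction: use the solvability of complex Hessian equations to reduce the cohomological (global) inequality to the pointwise G\aa rding inequality of Theorem \ref{thrm garding ineq}, applied to the hyperbolic polynomial $\sigma_m$ on real $(1,1)$-forms from Example \ref{exmple symmetric}(3). Since both sides of the claimed inequality are homogeneous of degree one in each argument $\alpha_j$ --- the left side by multilinearity of the polarization, the right side because $\sigma_m^{1/m}$ is homogeneous of degree one --- and since $\sigma_m(\alpha_j)>0$ for $\alpha_j\in\Gamma_m$, I would first rescale each $\alpha_j$ so that $\sigma_m(\alpha_j)=1$. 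After this normalization it suffices to prove $\sigma_m(\alpha_1,\dots,\alpha_m)\geq 1$.

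The key device is to gauge-fix all the representatives to a common volume form. Fix any smooth strictly positive volume form $\Phi$ with $\int_X\Phi=1$. For each $j$, starting from an $m$-positive representative of $\alpha_j$ (which exists by definition of $\Gamma_m$) and using the solvability of the complex Hessian equation recalled at the end of Section \ref{sec prel} --- legitimate because $\int_X\widehat{\alpha}_j^m\wedge\omega^{n-m}=\sigma_m(\alpha_j)=1=\int_X\Phi$ --- I obtain an $m$-positive form $\widehat{\alpha}_j$ in the class $\alpha_j$ with
\begin{equation*}
  \widehat{\alpha}_j^m \wedge \omega^{n-m} = \Phi, \qquad j=1,\dots,m.
\end{equation*}
Because each $\widehat{\alpha}_j$ lies in the pointwise cone $\Gamma_m$ at every point, Theorem \ref{thrm garding ineq} applied to $\sigma_m$ gives, pointwise,
\begin{equation*}
  \widehat{\alpha}_1 \wedge \cdots \wedge \widehat{\alpha}_m \wedge \omega^{n-m} \geq \prod_{j=1}^m \left( \widehat{\alpha}_j^m \wedge \omega^{n-m} \right)^{1/m} = \Phi,
\end{equation*}
where the $m$-th roots are taken relative to $\omega^n$. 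Integrating this inequality over $X$ yields $\sigma_m(\alpha_1,\dots,\alpha_m)=\int_X\widehat{\alpha}_1\wedge\cdots\wedge\widehat{\alpha}_m\wedge\omega^{n-m}\geq\int_X\Phi=1$, which is the desired inequality.

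For the equality case, the forward direction is immediate from multilinearity and homogeneity. Conversely, if equality holds after normalization, then combining the integral identity with the pointwise inequality above and continuity forces $\widehat{\alpha}_1\wedge\cdots\wedge\widehat{\alpha}_m\wedge\omega^{n-m}=\Phi$ at every point, so the pointwise G\aa rding inequality is an equality everywhere. Since $\sigma_m$ is \emph{complete} (Example \ref{exmple symmetric}(3)), Theorem \ref{thrm garding ineq} then shows that $\widehat{\alpha}_1(x),\dots,\widehat{\alpha}_m(x)$ are pairwise proportional at each point $x$; and because all of them satisfy $\sigma_m(\widehat{\alpha}_j(x))=\Phi/\omega^n$, the (necessarily positive) proportionality factors must all equal one, whence $\widehat{\alpha}_1=\cdots=\widehat{\alpha}_m$ as forms and $\alpha_1=\cdots=\alpha_m$ as classes; undoing the scaling gives pairwise proportionality of the original classes. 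The step I expect to require the most care is this passage from pointwise proportionality to proportionality of cohomology classes: the common gauge $\widehat{\alpha}_j^m\wedge\omega^{n-m}=\Phi$ is exactly what pins the pointwise proportionality constants to one, so choosing a single target volume form $\Phi$ for all $j$ --- rather than gauge-fixing each class independently --- is the crucial structural choice that makes both the inequality and its equality case fall out cleanly.
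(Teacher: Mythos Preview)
Your proposal is correct and follows essentially the same strategy as the paper: solve the complex Hessian equation to pick representatives with prescribed top form, apply the pointwise G\aa rding inequality for $\sigma_m$, and integrate; for equality, use completeness of $\sigma_m$ to get pointwise proportionality and then pin down the factor via the common Hessian gauge. The only cosmetic difference is that the paper solves $\widehat{\alpha}_k^m\wedge\omega^{n-m}=c_k\,\omega^n$ with the constants $c_k$ carried along, whereas you pre-normalize to $\sigma_m(\alpha_j)=1$ and gauge all forms to a single $\Phi$; your choice makes the equality argument marginally cleaner, but the content is the same.
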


\begin{proof}
The argument is inspired by \cite{gromov1990convex} and \cite{Dem93}, which reduces the global case to the local one.
By using complex Hessian equations, for every $k$, there exists an $m$-positive smooth representative $\widehat{\alpha}_k$ in the class of $\alpha_k$ solving
\begin{equation}\label{eq hessian}
  \widehat{\alpha}_k ^m \wedge \omega^{n-m} = c_k \omega^n,
\end{equation}
where $c_k = \int \alpha^m \wedge \omega^{n-m} / \int \omega^n$ is a positive constant.

Next we estimate $\sigma_m (\alpha_1,...,\alpha_m)$, by applying G{\aa}rding's inequality in the pointwise setting. We have
\begin{align*}
  \sigma_m (\alpha_1,...,\alpha_m) &= \int_X \widehat{\alpha}_1 \wedge ...\wedge \widehat{\alpha}_m \wedge \omega^{n-m}\\
  &\geq \int_X \prod_{k=1} ^m \left(\frac{\widehat{\alpha}_k ^m \wedge \omega^{n-m}}{\omega^n}\right)^{1/m} \omega^n\\
  &= \prod_{k=1} ^m c_k ^{1/m} \int_X \omega^n\\
  &= \prod_{k=1} ^m \sigma_m (\alpha_k)^{1/m},
\end{align*}
where in the above second inequality we have applied G{\aa}rding's inequality (see Theorem \ref{thrm garding ineq} and Example \ref{exmple symmetric}), and in the third equality we applied the Hessian equations (\ref{eq hessian}).

By the above estimates, it is clear that the equality in $\sigma_m (\alpha_1,...,\alpha_m) \geq \prod_{k=1} ^m \sigma_m (\alpha_k)^{1/m}$ holds if and only if we have equalities everywhere, in particular, we have
\begin{equation*}
  \widehat{\alpha}_1 \wedge ...\wedge \widehat{\alpha}_m \wedge \omega^{n-m}  =  \prod_{k=1} ^m \left(\frac{\widehat{\alpha}_k ^m \wedge \omega^{n-m}}{\omega^n}\right)^{1/m} \omega^n.
\end{equation*}
Applying Theorem \ref{thrm garding ineq} and Example \ref{exmple symmetric} again, the equality yields that the forms $\widehat{\alpha}_k$ are pairwise proportional at every point. We claim that this implies the classes $\alpha_k$ are pairwise proportional on $X$, by using the equations in (\ref{eq hessian}). Take $\widehat{\alpha}_1, \widehat{\alpha}_2$ for example, for any two points $p, q$ on $X$ there are two positive constants $c(p), c(q)$ such that
\begin{equation}\label{eq prop}
  \widehat{\alpha}_1 (p) = c(p) \widehat{\alpha}_2 (p), \ \  \widehat{\alpha}_1 (q) = c(q) \widehat{\alpha}_2 (q).
\end{equation}
Substituting the equalities in (\ref{eq prop}) to (\ref{eq hessian}) and noting that $c_1, c_2$ are constant functions on $X$, we get
\begin{equation*}
  c(p)^m = c(q)^m = c_1 /c_2.
\end{equation*}
Thus, $\widehat{\alpha}_1 (p) = \left(\frac{c_1}{c_2}\right)^{1/m} \widehat{\alpha}_2 (p)$ for any point $p$, which yields $$\alpha_1 = \left(\frac{c_1}{c_2}\right)^{1/m} \alpha_2$$
in $H^{1,1}(X, \mathbb{R})$.

This finishes the proof.

\end{proof}

\begin{rmk}
By Example \ref{exmple symmetric}, the function $\sigma_m$ defined on $\Lambda_{\mathbb{R}} ^{1,1} (\mathbb{C}^n)$ is complete and hyperbolic. By the classical Hodge index theorem, it is not hard to see that $\sigma_m$ defined on $H^{1,1}(X, \mathbb{R})$ is complete. However, except for $m=2$
(see the discussions in Section \ref{sec hessian to hodgeindex}), it is unclear to us whether $\sigma_m$ defined on $H^{1,1}(X, \mathbb{R})$ is hyperbolic. (In the general case, we suspect that it is not hyperbolic.)
\end{rmk}

\begin{lem}\label{lem concave ineq1}
Let $\sigma_m (\alpha) = \alpha^m \cdot \omega^{n-m}$, where $\alpha\in H^{1,1}(X, \mathbb{R})$.
Then we have:
\begin{itemize}
  \item The function $\sigma_m ^{1/m}$ is strictly concave on $\Gamma_m$ in the following sense: for any $\alpha, \beta \in \Gamma_m$ which are not proportional, the function $g(t) = \sigma_m ^{1/m} (\alpha + t \beta)$ is strictly concave when $t>0$.
\end{itemize}
\end{lem}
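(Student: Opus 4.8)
The plan is to avoid the direct second-derivative computation and instead exploit the fact that a positively homogeneous function of degree one is concave precisely when it is superadditive, feeding in the class-level G\aa rding inequality already proved in Lemma \ref{lem concave ineq}. To see why the naive route is awkward, write $Q(t)=\sigma_m(\alpha+t\beta)$ and $g=Q^{1/m}$, and abbreviate the polarized forms by $\sigma_m(\gamma^{m-1},\beta)=\sigma_m(\gamma,\dots,\gamma,\beta)$, etc. A routine differentiation shows that $g''(t)<0$ is equivalent to the reverse Cauchy--Schwarz inequality $\sigma_m(\gamma^{m-1},\beta)^2>\sigma_m(\gamma)\,\sigma_m(\gamma^{m-2},\beta^2)$ at $\gamma=\alpha+t\beta$. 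Although the pointwise analogue of this inequality for forms in $\Gamma_m$ is immediate from G\aa rding's concavity theorem, I cannot obtain the \emph{class} version by integrating the pointwise one, since integration sends a pointwise reverse Cauchy--Schwarz estimate in the unfavourable direction. This mismatch is the main obstacle, and the superadditivity reformulation is designed to sidestep it.

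Set $F=\sigma_m^{1/m}$, which is positive and homogeneous of degree one on the open convex cone $\Gamma_m$. The key step I would carry out is \emph{strict superadditivity}: for $u,v\in\Gamma_m$ that are not proportional, $F(u+v)>F(u)+F(v)$. By multilinearity and symmetry of the polarization, $\sigma_m(u+v)=\sum_{k=0}^m\binom{m}{k}\sigma_m(u^{m-k},v^k)$. I then apply Lemma \ref{lem concave ineq} to each mixed term to get $\sigma_m(u^{m-k},v^k)\ge F(u)^{m-k}F(v)^k$, where by the equality clause of that lemma the inequality is strict for every $k$ with $1\le k\le m-1$ (the $m$-tuple then contains both $u$ and $v$, so equality would force $u,v$ proportional). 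Since $u\not\sim v$ and $m\ge 2$, at least one such middle term is present and strict, whence $\sigma_m(u+v)>\sum_{k=0}^m\binom{m}{k}F(u)^{m-k}F(v)^k=(F(u)+F(v))^m$; taking $m$-th roots gives the claim.

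From strict superadditivity the strict concavity follows directly, with no further appeal to second derivatives or continuity. Given $t_0\neq t_1$ and $\lambda\in(0,1)$, put $\gamma_0=\alpha+t_0\beta$ and $\gamma_1=\alpha+t_1\beta$, both in $\Gamma_m$. Because $\alpha,\beta$ are not proportional and $t_0\neq t_1$, the classes $\gamma_0,\gamma_1$ are not proportional, and hence neither are $\lambda\gamma_0$ and $(1-\lambda)\gamma_1$. Using homogeneity of $F$ together with strict superadditivity applied to $u=\lambda\gamma_0$, $v=(1-\lambda)\gamma_1$, I obtain
\[
g\bigl(\lambda t_0+(1-\lambda)t_1\bigr)=F\bigl(\lambda\gamma_0+(1-\lambda)\gamma_1\bigr)>F(\lambda\gamma_0)+F\bigl((1-\lambda)\gamma_1\bigr)=\lambda\,g(t_0)+(1-\lambda)\,g(t_1),
\]
which is exactly strict concavity of $g$. (For $m=1$ no middle term exists and the assertion is vacuous, so throughout one takes $m\ge 2$.)

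In summary, the hard part is conceptual rather than computational: recognizing that the obvious second-derivative test demands a class-level reverse Cauchy--Schwarz inequality unattainable by integrating G\aa rding's pointwise concavity, and that reformulating concavity as superadditivity converts the problem into the AM--GM form of G\aa rding's inequality already in hand as Lemma \ref{lem concave ineq} — whose proof has quietly absorbed the complex Hessian equation normalization. The strictness then comes for free from the sharp equality case of that lemma.
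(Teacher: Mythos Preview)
Your proof is correct and is essentially the same argument as the paper's: both expand $\sigma_m$ of a sum via the binomial theorem and apply Lemma~\ref{lem concave ineq} term-by-term to the mixed polarized pieces, using its equality clause for strictness. The only difference is packaging---you isolate strict superadditivity of $F=\sigma_m^{1/m}$ as an intermediate step and then combine it with homogeneity, whereas the paper writes the concavity inequality $g(st_1+(1-s)t_2)>sg(t_1)+(1-s)g(t_2)$ directly; unwinding your step with $u=\lambda\gamma_0$, $v=(1-\lambda)\gamma_1$ reproduces the paper's computation line for line.
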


\begin{proof}
This follows immediately from Lemma \ref{lem concave ineq}. For any $s \in (0,1)$ and $t_1, t_2 >0$, we have:
\begin{align*}
  g(st_1 + (1-s)t_2) &= \left(
  \sum_{k=0} ^m \frac{m!}{k!(m-k)!}
   s^k (1-s)^{m-k} \sigma_m ((\alpha + t_1 \beta)^k, (\alpha + t_2 \beta)^{m-k})                                        \right)^{1/m}\\
  &> \left(\sum_{k=0} ^m \frac{m!}{k!(m-k)!}
s^k (1-s)^{m-k} \sigma_m (\alpha + t_1 \beta)^{k/m} \sigma_m (\alpha + t_2 \beta)^{m-k/m}                                        \right)^{1/m}\\
&= sg(t_1) + sg(t_2),
\end{align*}
where we applied Lemma \ref{lem concave ineq} in the second inequality.
\end{proof}

Starting with a homogeneous polynomial with concavity, the following arguments should be well-known. Here, we follow \cite{hormanderConvexity}.
By Lemma \ref{lem concave ineq1}, we get the following inequality.

\begin{lem}\label{lem hodge index pos}
Let $\alpha, \beta \in \Gamma_m$, then we have
\begin{align}\label{eq hodge index pos}
  \left(\beta \cdot \alpha\cdot \alpha^{m-2} \cdot \omega^{n-m} \right)^2
  \geq \left(\beta^2 \cdot \alpha ^{m-2} \cdot \omega^{n-m} \right)\left(\alpha^2 \cdot \alpha^{m-2} \cdot \omega^{n-m}\right).
\end{align}
\end{lem}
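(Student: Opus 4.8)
The plan is to extract this inequality from the concavity of $\sigma_m^{1/m}$ established in Lemma \ref{lem concave ineq1}, via the standard second-derivative argument for concave homogeneous functions (as in \cite{hormanderConvexity}). First I would set $P(t) = \sigma_m(\alpha + t\beta) = (\alpha + t\beta)^m \cdot \omega^{n-m}$, which is a genuine polynomial in $t$. Expanding by the binomial theorem,
\begin{equation*}
P(t) = \sum_{k=0}^{m} \binom{m}{k} t^k S_k, \qquad S_k := \beta^k \cdot \alpha^{m-k} \cdot \omega^{n-m}.
\end{equation*}
Since $\Gamma_m$ is a convex cone containing both $\alpha$ and $\beta$, we have $\alpha + t\beta \in \Gamma_m$ for all $t \geq 0$, so $P(t) = \sigma_m(\alpha + t\beta) > 0$ on this range; in particular $g(t) := P(t)^{1/m}$ is exactly the function appearing in Lemma \ref{lem concave ineq1}.

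Next I would exploit concavity. By Lemma \ref{lem concave ineq1}, $g$ is concave on $\{t > 0\}$, hence $g'' \le 0$ there. A direct differentiation gives
\begin{equation*}
g'' = \frac{1}{m}\, P^{\frac{1}{m}-2}\left[ P\,P'' - \frac{m-1}{m}(P')^2 \right],
\end{equation*}
and since $\frac{1}{m}\, P^{\frac{1}{m}-2} > 0$, concavity is equivalent to the differential inequality $P\,P'' \le \frac{m-1}{m}(P')^2$. Because $P(0) = \sigma_m(\alpha) > 0$, the function $P^{1/m}$ is smooth in a neighborhood of $0$, so this inequality persists at $t = 0$ by continuity.

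It then remains to evaluate everything at $t = 0$. From the binomial expansion, $P(0) = S_0$, $P'(0) = m S_1$ and $P''(0) = m(m-1) S_2$. Substituting into $P(0)P''(0) \le \frac{m-1}{m}\,(P'(0))^2$ yields $m(m-1)\,S_0 S_2 \le m(m-1)\,S_1^2$, and dividing by $m(m-1) > 0$ (recall $m \ge 2$, as the statement involves $\alpha^{m-2}$) gives $S_0 S_2 \le S_1^2$. Unwinding the definitions, $S_1 = \beta \cdot \alpha \cdot \alpha^{m-2} \cdot \omega^{n-m}$, $S_0 = \alpha^2 \cdot \alpha^{m-2} \cdot \omega^{n-m}$ and $S_2 = \beta^2 \cdot \alpha^{m-2} \cdot \omega^{n-m}$, so this is precisely (\ref{eq hodge index pos}).

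The only genuinely delicate point is the passage to the boundary value $t = 0$: Lemma \ref{lem concave ineq1} asserts concavity only on the open ray $t > 0$, so one must check that the differential inequality $P\,P'' \le \frac{m-1}{m}(P')^2$ continues to hold there. This is handled by the observation that $P$ is a polynomial with $P(0) > 0$, whence $g = P^{1/m}$ is real-analytic near $0$ and the concavity extends continuously to $t = 0$. Everything else reduces to routine differentiation of the explicit polynomial $P(t)$, so I do not expect any further obstacle.
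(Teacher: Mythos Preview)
Your proof is correct and follows essentially the same approach as the paper: both use the concavity of $g(t)=\sigma_m^{1/m}(\alpha+t\beta)$ from Lemma~\ref{lem concave ineq1}, compute the second derivative, and pass to $t=0$ by a limit/continuity argument. The only cosmetic difference is that you package the calculation via the inequality $P\,P'' \le \tfrac{m-1}{m}(P')^2$ and then specialize, whereas the paper writes out $g'(t)$ and $g''(t)$ directly in terms of intersection numbers before letting $t\to 0$.
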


\begin{proof}

By Lemma \ref{lem concave ineq1}, the function
$$g(t) = \sigma_m ^{1/m} (\alpha + t \beta)$$
is concave. Thus, $g''(t)\leq 0$ for any $t>0$. A straightforward calculation shows that:
\begin{equation*}
  g'(t) = \left((\alpha + t\beta)^m \cdot \omega^{n-m}\right)^{\frac{1}{m} -1} \left(\beta \cdot  (\alpha + t\beta)^{m-1} \cdot \omega^{n-m}\right),
\end{equation*}
which yields
\begin{align*}
  g''(t) =& (m-1)\left( (\alpha + t\beta)^{m} \cdot \omega^{n-m}\right)^{\frac{1}{m} -1} \left(\beta^2 \cdot (\alpha + t\beta)^{m-2} \cdot \omega^{n-m} \right)\\
   &- (m-1) \left( (\alpha + t\beta)^{m} \cdot \omega^{n-m}\right)^{\frac{1}{m} -2} \left(\beta \cdot (\alpha + t\beta)^{m-1} \cdot \omega^{n-m} \right)^2.
\end{align*}
Using $g''(t)\leq0$ for any $t>0$ implies
\begin{align*}
  &\left(\beta \cdot (\alpha + t\beta)\cdot (\alpha + t\beta)^{m-2} \cdot \omega^{n-m} \right)^2\\
  &\geq \left(\beta^2 \cdot (\alpha + t\beta)^{m-2} \cdot \omega^{n-m} \right)\left((\alpha + t\beta)^2 \cdot (\alpha + t\beta)^{m-2} \cdot \omega^{n-m}\right).
\end{align*}


Taking a limit $t\rightarrow 0$ finishes the proof of Lemma \ref{lem hodge index pos}.
\end{proof}

\begin{rmk}
In the proof of Lemma \ref{lem hodge index pos}, we use the fact that $g(t)$ is concave. By Lemma \ref{lem concave ineq1}, it is even strictly concave when $\alpha, \beta$ are not proportional, however, this does not imply directly that $g''(t)<0$ for any $t>0$ (even though it is, see Remark \ref{rmk strict conc}). Anyway, when $\alpha, \beta$ are not proportional, the strict concavity implies that $g''(t)$ can not vanish on any sub-intervals of $\mathbb{R}_+$.
\end{rmk}

Lemma \ref{lem hodge index pos} can be generalized to the following form.

\begin{lem}\label{lem hodge index pos1}
Let $\alpha\in \Gamma_m$, $\beta\in H^{1,1} (X, \mathbb{R})$, then we have
\begin{align}\label{eq hodge index pos1}
  \left(\beta \cdot \alpha\cdot \alpha^{m-2} \cdot \omega^{n-m} \right)^2
  \geq \left(\beta^2 \cdot \alpha ^{m-2} \cdot \omega^{n-m} \right)\left(\alpha^2 \cdot \alpha^{m-2} \cdot \omega^{n-m}\right).
\end{align}
\end{lem}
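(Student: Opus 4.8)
The plan is to deduce Lemma \ref{lem hodge index pos1} from the already-established Lemma \ref{lem hodge index pos} by shifting $\beta$ in the direction of $\alpha$, using that the quantity in question is invariant under such shifts while the hypothesis $\beta \in \Gamma_m$ can be arranged after shifting. Introduce the abbreviations
$$A = \alpha^m \cdot \omega^{n-m}, \quad B = \beta \cdot \alpha^{m-1} \cdot \omega^{n-m}, \quad C = \beta^2 \cdot \alpha^{m-2} \cdot \omega^{n-m},$$
so that the claimed inequality (\ref{eq hodge index pos1}) reads $B^2 \geq AC$. Note that $A = \sigma_m(\alpha) > 0$, since $\alpha \in \Gamma_m$ has an $m$-positive representative $\widehat{\alpha}$ with $\widehat{\alpha}^m \wedge \omega^{n-m} > 0$ pointwise.

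First I would record the key algebraic fact: the discriminant $B^2 - AC$ is unchanged when $\beta$ is replaced by $\beta_\lambda := \beta + \lambda \alpha$ for any $\lambda \in \mathbb{R}$. By multilinearity of the intersection product one computes $B_\lambda = B + \lambda A$ and $C_\lambda = C + 2\lambda B + \lambda^2 A$, whence $B_\lambda^2 - A C_\lambda = B^2 - AC$ after cancellation. This one-line computation is the heart of the argument.

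Next I would use that $\Gamma_m \subset H^{1,1}(X, \mathbb{R})$ is an \emph{open} convex cone containing $\alpha$. Since $\tfrac{1}{\lambda}\beta + \alpha \to \alpha$ as $\lambda \to +\infty$ and $\alpha$ is an interior point, for all sufficiently large $\lambda$ the class $\tfrac{1}{\lambda}\beta + \alpha$ lies in $\Gamma_m$; because $\Gamma_m$ is a cone, this gives $\beta_\lambda = \lambda\bigl(\tfrac{1}{\lambda}\beta + \alpha\bigr) \in \Gamma_m$. For such $\lambda$ both $\alpha$ and $\beta_\lambda$ lie in $\Gamma_m$, so Lemma \ref{lem hodge index pos} applies to the pair $(\alpha, \beta_\lambda)$ and yields $B_\lambda^2 \geq A C_\lambda$, i.e.\ $B_\lambda^2 - A C_\lambda \geq 0$. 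Combining with the shift-invariance of the previous step gives $B^2 - AC = B_\lambda^2 - A C_\lambda \geq 0$, which is exactly (\ref{eq hodge index pos1}).

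I do not expect a genuine obstacle here: the argument is the standard passage from a ``both classes $m$-positive'' Hodge-index inequality to its one-sided polarized form, and the only point requiring care is the openness of $\Gamma_m$, which guarantees $\beta_\lambda \in \Gamma_m$ for large $\lambda$. The one feature genuinely lost in this extension is the equality characterization: since an arbitrary $\beta$ need not be $m$-positive, Lemma \ref{lem hodge index pos1} is stated as an inequality only, and the proportionality statement of Lemma \ref{lem hodge index pos} is not claimed.
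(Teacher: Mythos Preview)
Your argument is correct and matches the paper's proof essentially verbatim: the paper also introduces the quadratic form $Q(\beta)$, observes $Q(\beta + t\alpha) = Q(\beta)$, and then applies Lemma \ref{lem hodge index pos} after choosing $t$ large enough to force $\beta + t\alpha \in \Gamma_m$. Your write-up is in fact more detailed on both the shift-invariance computation and the openness argument for $\Gamma_m$.
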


\begin{proof}
Consider the quadratic form $$Q(\beta) = \left(\beta \cdot \alpha\cdot \alpha^{m-2} \cdot \omega^{n-m} \right)^2
  -\left(\beta^2 \cdot \alpha ^{m-2} \cdot \omega^{n-m} \right)\left(\alpha^2 \cdot \alpha^{m-2} \cdot \omega^{n-m}\right).$$
It is clear that for any $t\in \mathbb{R}$, we have $Q(\beta + t \alpha) = Q(\beta)$. Since $\alpha \in \Gamma_m$, $\beta + t \alpha$ falls in $\Gamma_m$ if $t>0$ is large enough. Applying Lemma \ref{lem hodge index pos} to $\beta + t \alpha \in \Gamma_m$, $Q(\beta + t \alpha) \geq 0$. This proves Lemma \ref{lem hodge index pos1}.
\end{proof}

As an immediate corollary of Lemma \ref{lem hodge index pos1}, we get:

\begin{cor}\label{hodge index}
Let $\alpha \in \Gamma_m$.
If $\beta \in H^{1,1}(X, \mathbb{R})$ satisfies $\beta \cdot \alpha^{m-1} \cdot \omega^{n-m} =0$, then
\begin{equation*}
  \beta^2 \cdot \alpha^{m-2} \cdot \omega^{n-m} \leq 0.
\end{equation*}
\end{cor}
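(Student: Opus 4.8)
The plan is to obtain this as a direct specialization of Lemma \ref{lem hodge index pos1}, so the argument is short. First I would rewrite the left-hand side of inequality (\ref{eq hodge index pos1}): since $\alpha \cdot \alpha^{m-2} = \alpha^{m-1}$, the factor $\beta \cdot \alpha \cdot \alpha^{m-2} \cdot \omega^{n-m}$ is exactly $\beta \cdot \alpha^{m-1} \cdot \omega^{n-m}$. Under the hypothesis $\beta \cdot \alpha^{m-1} \cdot \omega^{n-m} = 0$ this quantity vanishes, so the left-hand side of (\ref{eq hodge index pos1}) is zero.

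Feeding this into Lemma \ref{lem hodge index pos1} then gives
$$0 \geq \left(\beta^2 \cdot \alpha^{m-2} \cdot \omega^{n-m}\right)\left(\alpha^2 \cdot \alpha^{m-2} \cdot \omega^{n-m}\right).$$
The second factor is $\alpha^m \cdot \omega^{n-m} = \sigma_m(\alpha)$, and this is strictly positive: since $\alpha \in \Gamma_m$ it admits an $m$-positive representative $\widehat{\alpha}$, so $\widehat{\alpha}^m \wedge \omega^{n-m} > 0$ pointwise, and integrating over $X$ yields $\sigma_m(\alpha) > 0$. Dividing the displayed inequality by this positive number leaves $\beta^2 \cdot \alpha^{m-2} \cdot \omega^{n-m} \leq 0$, as claimed.

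There is essentially no obstacle here; all the content lives in Lemma \ref{lem hodge index pos1}, and the corollary is just the translation of that reverse Cauchy--Schwarz inequality into the language of a quadratic form annihilated on a hyperplane. The only point worth double-checking is the strict positivity of $\sigma_m(\alpha)$, which is what allows one to pass from ``the product is $\leq 0$'' to ``the desired factor is $\leq 0$''; this is immediate from the definition of $m$-positivity. It is also worth noting the parallel with the classical situation: the hyperplane $\{\beta : \beta \cdot \alpha^{m-1} \cdot \omega^{n-m} = 0\}$ plays the role of the primitive subspace, and the corollary asserts that the quadratic form $\beta \mapsto \beta^2 \cdot \alpha^{m-2} \cdot \omega^{n-m}$ is negative semidefinite on it. This is precisely the inequality half of the Hodge-index statement that Theorem \ref{thrm main result} will later sharpen into genuine negative definiteness.
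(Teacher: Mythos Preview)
Your proposal is correct and follows the same approach as the paper: the corollary is obtained directly from Lemma~\ref{lem hodge index pos1} by noting that the hypothesis kills the left-hand side of (\ref{eq hodge index pos1}) and that the factor $\alpha^m \cdot \omega^{n-m}>0$. The paper's proof is in fact just the single sentence ``By Lemma~\ref{lem hodge index pos1}, we get the inequality,'' so your write-up simply makes the implicit step of dividing by $\sigma_m(\alpha)>0$ explicit.
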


\begin{proof}
By Lemma \ref{lem hodge index pos1}, we get the inequality. 
\end{proof}

This is a special case of Theorem \ref{thrm main result}.

\subsection{Proof of the main result}
Motivated by Corollary \ref{hodge index} and inspired by the mixed Hodge-Riemann bilinear relations in \cite{DN06,cattanimixedHRR}, it is natural to expect a mixed Hodge-index type theorem for $(1,1)$-classes, i.e., Theorem \ref{thrm main result}.

\begin{lem}\label{lem positive n-1}
Assume that $\omega \in \Lambda_{\mathbb{R}} ^{1,1}(\mathbb{C}^n)$ is a K\"ahler metric, and $\widehat{\alpha}_1,...,\widehat{\alpha}_{m-1}\in \Lambda_{\mathbb{R}} ^{1,1}(\mathbb{C}^n)$ are $m$-positive with respect to $\omega$, then
\begin{equation*}
\omega^{n-m}\wedge \widehat{\alpha}_1 \wedge...\wedge \widehat{\alpha}_{m-1}
\end{equation*}
is a strictly positive $(n-1, n-1)$-form.
\end{lem}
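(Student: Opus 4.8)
The plan is to test the $(n-1,n-1)$-form against rank-one positive $(1,1)$-forms, thereby reducing the statement to Gårding's strict positivity for the polarized $\sigma_m$. Recall that a real $(n-1,n-1)$-form $\Theta$ is strictly positive precisely when $\Theta \wedge i\xi\wedge\bar\xi$ is a strictly positive multiple of the volume form for every nonzero $(1,0)$-covector $\xi$. Hence it suffices to show that for every $\xi \neq 0$,
\[
\omega^{n-m}\wedge\widehat{\alpha}_1 \wedge\dots\wedge\widehat{\alpha}_{m-1}\wedge i\xi\wedge\bar\xi > 0,
\]
understood as a positive multiple of $\omega^n$.

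First I would recognize the left-hand side as a value of the completely polarized elementary symmetric form. By Example \ref{exmple symmetric}(3), the symmetric multilinear map $(\beta_1,\dots,\beta_m) \mapsto \beta_1\wedge\dots\wedge\beta_m\wedge\omega^{n-m}$ restricts to $\sigma_m$ on the diagonal, so by the uniqueness characterization of the polarization it coincides, up to a fixed positive normalization, with $\sigma_m(\beta_1,\dots,\beta_m)$. Thus the quantity above is a positive multiple of $\sigma_m(\widehat{\alpha}_1,\dots,\widehat{\alpha}_{m-1}, i\xi\wedge\bar\xi)$, and the claim reduces to the strict positivity of this polarized value.

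Next I would locate the arguments inside the relevant cones. Each $\widehat{\alpha}_j$ lies in the open cone $\Gamma_m$ by hypothesis. For the rank-one form $i\xi\wedge\bar\xi$, diagonalizing with respect to $\omega$ shows its eigenvalues are $(|\xi|^2, 0, \dots, 0)$, so $\sigma_1 > 0$ while $\sigma_l = 0$ for $2 \leq l \leq m$; moreover $i\xi\wedge\bar\xi + \varepsilon\omega$ has all eigenvalues positive, hence lies in $\Gamma_m$ for every $\varepsilon > 0$ and converges to $i\xi\wedge\bar\xi$ as $\varepsilon \to 0^+$. Therefore $i\xi\wedge\bar\xi \in \overline{\Gamma_m}\setminus\{0\}$.

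Finally I would invoke Gårding's positivity. Since $\sigma_m$ is hyperbolic at $\omega$ and complete (Example \ref{exmple symmetric}(3)), the last assertion of Theorem \ref{thrm garding positive} applies with the single boundary argument $i\xi\wedge\bar\xi \in \overline{\Gamma_m}\setminus\{0\}$ and the interior arguments $\widehat{\alpha}_1,\dots,\widehat{\alpha}_{m-1}\in\Gamma_m$, yielding
\[
\sigma_m(i\xi\wedge\bar\xi, \widehat{\alpha}_1,\dots,\widehat{\alpha}_{m-1}) > 0.
\]
As $\xi \neq 0$ was arbitrary, this gives the desired strict positivity of $\omega^{n-m}\wedge\widehat{\alpha}_1\wedge\dots\wedge\widehat{\alpha}_{m-1}$. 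The only delicate point is the bookkeeping at the boundary of the cone: one must confirm both that $i\xi\wedge\bar\xi$ genuinely lies in $\overline{\Gamma_m}\setminus\{0\}$ (which the explicit approximation by $i\xi\wedge\bar\xi+\varepsilon\omega$ secures, rather than merely checking that its symmetric functions are nonnegative), and that exactly one argument is allowed to sit on the boundary while the remaining $m-1$ stay in the open cone — which is precisely the content of the completeness clause of Theorem \ref{thrm garding positive}.
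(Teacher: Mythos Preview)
Your argument is correct and follows essentially the same route as the paper: reduce strict positivity of the $(n-1,n-1)$-form to the polarized G{\aa}rding positivity statement (Theorem~\ref{thrm garding positive}) by placing the test form in $\overline{\Gamma_m}\setminus\{0\}$ while the $\widehat{\alpha}_j$ stay in the open cone. The only cosmetic difference is that the paper tests against an arbitrary nonzero semi-positive $(1,1)$-form $\widehat{\beta}$ and invokes the inclusion $\overline{\Gamma_n}\subset\overline{\Gamma_m}$, whereas you test against the rank-one forms $i\xi\wedge\bar\xi$ and verify membership in $\overline{\Gamma_m}$ by explicit approximation; both are the same idea.
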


\begin{proof}
We only need to check that for any non-zero semi-positive $(1,1)$-form $\widehat{\beta}\in \Lambda_{\mathbb{R}} ^{1,1}(\mathbb{C}^n)$,
\begin{equation*}
\omega^{n-m}\wedge \widehat{\alpha}_1 \wedge...\wedge \widehat{\alpha}_{m-1}\wedge \widehat{\beta} >0.
\end{equation*}

To this end, note that $\overline{\Gamma}_n$ is the set of semi-positive $(1,1)$-forms and $\overline{\Gamma}_n\subset \overline{\Gamma}_m$, then applying Theorem \ref{thrm garding positive} yields the result.
\end{proof}

\begin{lem}\label{lem hodge index local}
In the same setting of Lemma \ref{lem positive n-1}, assume that $\widehat{\beta}\in \Lambda_{\mathbb{R}} ^{1,1}(\mathbb{C}^n)$ satisfies
\begin{equation*}
  \omega^{n-m}\wedge \widehat{\alpha}_1 \wedge...\wedge \widehat{\alpha}_{m-1}\wedge \widehat{\beta}=0,
\end{equation*}
then
\begin{equation*}
  \omega^{n-m}\wedge \widehat{\alpha}_1 \wedge...\wedge \widehat{\alpha}_{m-2}\wedge \widehat{\beta}^2 \leq 0
\end{equation*}
with equality holds if and only if $\widehat{\beta}=0$.
\end{lem}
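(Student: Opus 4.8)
The plan is to freeze $m-2$ of the polarizing factors so that the degree-$m$ statement collapses to an elementary fact about a single Lorentzian quadratic form, the reduction being exactly what G{\aa}rding's theory provides. First I would recall from Example \ref{exmple symmetric}(3) that $\sigma_m(\widehat{\alpha}) = \widehat{\alpha}^m \wedge \omega^{n-m}$ is hyperbolic at $\omega$ and complete, with positive cone $\Gamma_m$, and that its complete polarization is $\sigma_m(\widehat{\gamma}_1,\dots,\widehat{\gamma}_m) = \widehat{\gamma}_1 \wedge \cdots \wedge \widehat{\gamma}_m \wedge \omega^{n-m}$ (valued in top forms, identified with $\mathbb{R}$ via the orientation). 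Then I freeze the $m-2$ vectors $\widehat{\alpha}_1,\dots,\widehat{\alpha}_{m-2} \in \Gamma_m$ and set $q(x) := \sigma_m(\widehat{\alpha}_1,\dots,\widehat{\alpha}_{m-2},x,x) = \widehat{\alpha}_1 \wedge \cdots \wedge \widehat{\alpha}_{m-2} \wedge x^2 \wedge \omega^{n-m}$, a quadratic form whose polarization is $B(x,y) = \widehat{\alpha}_1 \wedge \cdots \wedge \widehat{\alpha}_{m-2} \wedge x \wedge y \wedge \omega^{n-m}$.

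Next I would apply Theorem \ref{thrm garding positive} with $l = m-2$, so that $m-l = 2$. This yields that $q$ is hyperbolic at $\omega$ and \emph{complete}, and that $\Gamma_m \subset \Gamma(q,\omega)$. The point is that a complete degree-$2$ hyperbolic polynomial is precisely a nondegenerate quadratic form of Lorentzian signature $(1,N-1)$, where $N := \dim_{\mathbb{R}}\Lambda^{1,1}_{\mathbb{R}}(\mathbb{C}^n)$. Indeed, hyperbolicity of $q$ at a vector $a$ is equivalent, upon expanding the quadratic $s \mapsto q(sa+x) = s^2 q(a) + 2s\,B(a,x) + q(x)$ and requiring real roots, to the discriminant bound $B(a,x)^2 \geq q(a)\,q(x)$ for all $x$; when $q(a)>0$ this forces $q$ to be negative semidefinite on the $B$-orthogonal complement $a^{\perp}$, and completeness (i.e. nondegeneracy of $B$, since $Lq$ equals the radical of $B$) upgrades this to negative definiteness on $a^{\perp}$.

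Now since $\widehat{\alpha}_{m-1} \in \Gamma_m \subset \Gamma(q,\omega)$ it is timelike, $q(\widehat{\alpha}_{m-1})>0$ (this also follows from the last assertion of Theorem \ref{thrm garding positive}), and $q$ is again hyperbolic at $\widehat{\alpha}_{m-1}$ with the same positive cone. The hypothesis of the lemma reads exactly $B(\widehat{\alpha}_{m-1},\widehat{\beta}) = 0$, i.e. $\widehat{\beta} \in \widehat{\alpha}_{m-1}^{\perp}$, so the negative definiteness gives $q(\widehat{\beta}) \leq 0$ with equality only for $\widehat{\beta}=0$. Unwinding $q(\widehat{\beta})$ is precisely $\widehat{\alpha}_1 \wedge \cdots \wedge \widehat{\alpha}_{m-2} \wedge \widehat{\beta}^2 \wedge \omega^{n-m} \leq 0$, with equality iff $\widehat{\beta}=0$, as claimed.

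The only genuinely delicate point is the equality case, which needs negative \emph{definiteness} rather than mere semidefiniteness on $\widehat{\alpha}_{m-1}^{\perp}$; this is exactly why I stop the G{\aa}rding reduction at $l=m-2$, since $m-l=2$ is the threshold at which Theorem \ref{thrm garding positive} still guarantees completeness of the reduced polynomial $q$, and completeness is what delivers the required nondegeneracy of $B$. For the inequality alone one can bypass the signature discussion entirely: hyperbolicity of $q$ at the timelike vector $\widehat{\alpha}_{m-1}$ gives the discriminant bound $B(\widehat{\alpha}_{m-1},\widehat{\beta})^2 \geq q(\widehat{\alpha}_{m-1})\,q(\widehat{\beta})$ directly, so the orthogonality hypothesis immediately forces $q(\widehat{\beta}) \leq 0$; only the strictness then still calls on the completeness argument above.
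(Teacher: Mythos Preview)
Your proof is correct and follows essentially the same route as the paper: both freeze $m-2$ arguments via Theorem \ref{thrm garding positive} to obtain a complete hyperbolic quadratic form $q$, and then exploit its Lorentzian nature. The only cosmetic difference is that the paper phrases the endgame through G{\aa}rding's inequality (Theorem \ref{thrm garding ineq}) together with the shift trick of Lemma \ref{lem hodge index pos1} to force $\widehat{\beta}=c\,\widehat{\alpha}_{m-1}$ in the equality case, whereas you argue directly from the signature $(1,N-1)$ on $\widehat{\alpha}_{m-1}^{\perp}$.
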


\begin{proof}
This follows from Theorem \ref{thrm garding positive} and Theorem \ref{thrm garding ineq}.

Denote $\widehat{\Omega}=\omega^{n-m}\wedge \widehat{\alpha}_1 \wedge...\wedge \widehat{\alpha}_{m-2}$. Consider the polynomial $q(\widehat{\gamma})= \widehat{\Omega} \wedge \widehat{\gamma}^2$, then by G{\aa}rding's theory $q$ is complete and hyperbolic. Thus for any $\widehat{\beta}_1, \widehat{\beta}_2 \in \Gamma_m$,
\begin{equation*}
  q(\widehat{\beta}_1, \widehat{\beta}_2)^2 \geq q(\widehat{\beta}_1) q(\widehat{\beta}_1)
\end{equation*}
with equality holds if and only if $\widehat{\beta}_1, \widehat{\beta}_2$ are proportional. Similar to the arguments in Lemma \ref{lem hodge index pos1}, this can be generalized to the case when $\widehat{\beta}_1 \in \Gamma_m,  \widehat{\beta}_2 \in \Lambda_{\mathbb{R}} ^{1,1} (\mathbb{C}^n)$. When $\widehat{\Omega} \wedge \widehat{\beta}^2 =0$ and $\widehat{\Omega} \wedge \widehat{\alpha}_{m-1} \wedge \widehat{\beta} =0$, we get $\widehat{\beta} = c \widehat{\alpha}_{m-1}$, which in turns implies $c=0$, thus $\widehat{\beta} =0$. This finishes the proof.
\end{proof}

Now we give the proof of Theorem \ref{thrm main result}.

\begin{proof}[Proof of Theorem \ref{thrm main result}]
For the first part, if $\beta \in P^{1,1} (X, \mathbb{C})$, then its real part and imaginary part are in $P^{1,1} (X, \mathbb{R})$.
It is easy to see that it is sufficient to verify the negative definiteness on $P^{1,1} (X, \mathbb{R})$.

Take $\beta \in P^{1,1} (X, \mathbb{R})$. Let $\widehat{\beta}$ be a smooth representative of $\beta$, and let $\widehat{\alpha}_j$ be an $m$-positive representative of $\alpha_j$. Then $\beta \in P^{1,1} (X, \mathbb{R})$ is equivalent to
\begin{equation}\label{eq int 0}
  \int \widehat{\beta} \wedge \omega^{n-m}\wedge \widehat{\alpha}_1 \wedge ...\wedge \widehat{\alpha}_{m-1} =0.
\end{equation}
By Lemma \ref{lem positive n-1}, $\omega^{n-m}\wedge \widehat{\alpha}_1 \wedge ...\wedge \widehat{\alpha}_{m-1}$ is a strictly positive $(n-1, n-1)$-form at every point. It is also $d$-closed. Then (\ref{eq int 0}) guarantees that the Laplacian equation
\begin{equation}\label{eq laplacian}
  (\widehat{\beta}+dd^c \phi) \wedge \omega^{n-m}\wedge \widehat{\alpha}_1 \wedge ...\wedge \widehat{\alpha}_{m-1} =0
\end{equation}
always has a smooth solution $\phi$.

Write $\widehat{\beta}_\phi :=\widehat{\beta}+dd^c \phi$. Then (\ref{eq laplacian}) means that $\widehat{\beta}_\phi$ is a primitive $(1,1)$-form with respect to $\omega^{n-m}\wedge \widehat{\alpha}_1 \wedge ...\wedge \widehat{\alpha}_{m-1}$. By Lemma \ref{lem hodge index local}, we get
\begin{equation}\label{eq local hodge}
\widehat{\beta}_\phi ^2 \wedge \omega^{n-m}\wedge \widehat{\alpha}_1 \wedge ...\wedge \widehat{\alpha}_{m-2} \leq 0
\end{equation}
with equality holds if and only if $\widehat{\beta}_\phi =0$. Thus,
\begin{equation}\label{eq hodge index ineq}
  \beta^2 \cdot \omega^{n-m}\cdot \alpha_1 \cdot ...\cdot \omega_{m-2} =\int  \widehat{\beta}_\phi ^2 \wedge \omega^{n-m}\wedge \widehat{\alpha}_1 \wedge ...\wedge \widehat{\alpha}_{m-2}\leq 0.
\end{equation}

By (\ref{eq local hodge}), the equality in (\ref{eq hodge index ineq}) holds if and only if
\begin{equation}\label{eq eq case}
  \widehat{\beta}_\phi ^2 \wedge \omega^{n-m}\wedge \widehat{\alpha}_1 \wedge ...\wedge \widehat{\alpha}_{m-2} = 0
\end{equation}
at every point.
By (\ref{eq laplacian}), (\ref{eq eq case}) and Lemma \ref{lem hodge index local}, the equality in (\ref{eq hodge index ineq}) holds if and only if $\widehat{\beta}_\phi =0$, which is equivalent to $\beta =0$ in $H^{1,1} (X, \mathbb{R})$.

For the second part, for any $\gamma \in H^{1,1} (X, \mathbb{C})$, take $c \in \mathbb{C}$ such that
\begin{equation*}
  (\gamma - c \alpha_{m-1})\cdot  \alpha_{m-1} \cdot \Omega \cdot \omega^{n-m} =0.
\end{equation*}
Then $\gamma = (\gamma - c \alpha_{m-1}) + c \alpha_{m-1}$ is the desired Hodge decomposition.

For the last part, we only need to verify that the map is injective. Assume that $\beta$ satisfies $$\omega^{n-m}\cdot \Omega \cdot \beta =0,$$
then
\begin{equation*}
\omega^{n-m}\cdot \Omega \cdot \beta\cdot \overline{\beta} = \omega^{n-m}\cdot \Omega \cdot \alpha_{m-1} \cdot \beta=0.
\end{equation*}
Thus $\beta =0$ by the first part, which implies the injectivity of the map.

This finishes the proof of the theorem.
\end{proof}

Note that the Laplacian equation can be seen as a special case of the more general $dd^c$-equations used in \cite{DN06}.

\begin{rmk}
Assume that $\beta_1, \beta_2 \in H^{1,1} (X, \mathbb{R})$ are primitive, i.e., $\beta_i \cdot \alpha_{m-1}\cdot \Omega \cdot \omega^{n-m}=0$, then by Theorem \ref{thrm main result} the symmetric matrix \begin{equation*}
\left[
  \begin{array}{cc}
    \beta_1^2 \cdot \Omega\cdot \omega^{n-m} & \beta_1\cdot \beta_2\cdot  \Omega \cdot \omega^{n-m}\\
    \beta_1\cdot \beta_2\cdot  \Omega \cdot \omega^{n-m} &  \beta_2 ^2\cdot  \Omega \cdot \omega^{n-m} \\
  \end{array}
\right]
\leq 0
\end{equation*}
with the matrix degenerate if and only if $\beta_1, \beta_2$ are proportional.
\end{rmk}

\begin{rmk}\label{rmk strict conc}
By Theorem \ref{thrm main result}, the function $g(t)$ in Lemma \ref{lem concave ineq1} satisfies $g''(t)<0$ for any $t\geq 0$.
\end{rmk}

\begin{rmk}
In convex geometry setting, the corresponding Alexandrov-Fenchel inequalities have been established by \cite{guanpfAF}, where the authors made use of similar arguments of Alexandrov \cite{alexandrov1938} and also G{\aa}rding's theory for hyperbolic polynomials. Actually, this analogous result is also one of our motivation for Theorem \ref{thrm main result}.
\end{rmk}

\subsection{Log-concavity}
As an immediate consequence of Theorem \ref{thrm main result}, we get:

\begin{thrm}\label{thrm hyperbolic }
In the same setting of Theorem \ref{thrm main result}, the polynomial $Q(\beta):=\beta^2 \cdot \Omega \cdot \omega^{n-m}$ defined on $H^{1,1}(X, \mathbb{R})$ is complete and hyperbolic.
\end{thrm}

This implies the following form of Khovanskii-Teissier inequalities for $m$-positive classes.

\begin{prop}\label{KT m-posiitve}
Let $X$ be a compact K\"ahler manifold of dimension $n$, and let $\omega$ be a reference K\"ahler metric on $X$. Assume that $\alpha, \beta \in H^{1,1}(X, \mathbb{R})$ are $m$-positive with respect to $\omega$. Denote $a_k = \alpha^k \cdot  \beta^{m-k} \cdot \omega^{n-m}$, where $0\leq k \leq m$. Then the sequence $\{a_k\}$ is log concave, i.e., $$a_k ^2 \geq a_{k+1} a_{k-1}$$ for any $1\leq k \leq m-1$. Moreover, $a_k ^2 = a_{k+1} a_{k-1}$ for some $k$ if and only if $\alpha, \beta$ are proportional.
\end{prop}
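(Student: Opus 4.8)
The plan is to deduce the log-concavity of the sequence $\{a_k\}$ directly from the hyperbolicity statement in Theorem \ref{thrm hyperbolic }, since that theorem is explicitly billed as an immediate precursor. Fix $\alpha, \beta \in \Gamma_m$. For each fixed value of the ``background'' $(m-2)$-fold product, the log-concavity $a_k^2 \geq a_{k+1}a_{k-1}$ is exactly the statement that a single step along the sequence is concave in the appropriate sense; the standard route is to apply the Hodge-index inequality of Lemma \ref{lem hodge index pos1} (or equivalently the negative-definiteness in Theorem \ref{thrm main result}) with the polarization $\Omega$ chosen as a mixed product $\alpha^{k-1}\cdot \beta^{m-k-1}$. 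First I would record the elementary reduction that $a_k^2 \geq a_{k+1}a_{k-1}$ is a Cauchy–Schwarz-type inequality for the bilinear form $(\mu,\nu)\mapsto \mu\cdot\nu\cdot \alpha^{k-1}\cdot\beta^{m-k-1}\cdot\omega^{n-m}$ evaluated on the pair $(\alpha,\beta)$; the inequality then follows once this form is shown to have signature $(1,N-1)$, which is precisely what Theorem \ref{thrm main result} provides.

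The key steps, in order, are the following. First, I would verify that the mixed product $\Omega_k := \alpha^{k-1}\cdot\beta^{m-k-1}$ is a product of $m-2$ classes each $m$-positive with respect to $\omega$, so that Theorem \ref{thrm main result} applies verbatim with this choice of $\Omega$ and with the remaining ``$\alpha_{m-1}$'' slot filled by either $\alpha$ or $\beta$. Second, I would invoke the negative-definiteness of $Q(\mu,\nu)=\mu\cdot\nu\cdot\Omega_k\cdot\omega^{n-m}$ on the primitive subspace: decomposing $\beta$ (say) into its $\alpha$-primitive part plus a multiple of $\alpha$ and expanding $Q$ yields the reverse Cauchy–Schwarz inequality
\begin{equation*}
  (\alpha\cdot\beta\cdot\Omega_k\cdot\omega^{n-m})^2 \geq (\alpha^2\cdot\Omega_k\cdot\omega^{n-m})(\beta^2\cdot\Omega_k\cdot\omega^{n-m}),
\end{equation*}
which is exactly $a_k^2 \geq a_{k+1}a_{k-1}$. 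Third, for the equality case I would trace back through the equality clause of Theorem \ref{thrm main result}: equality forces the $\alpha$-primitive part of $\beta$ to vanish, i.e. $\beta$ is a multiple of $\alpha$ in $H^{1,1}(X,\mathbb{R})$, which is the asserted proportionality; conversely proportionality makes all $a_k$ into powers of a single number and the inequality degenerates to equality.

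The main obstacle I anticipate is purely bookkeeping rather than conceptual: one must confirm that the signature and equality statements of Theorem \ref{thrm main result}, which are stated for the fixed polarization $\Omega = \alpha_1\cdots\alpha_{m-2}$, transfer cleanly to the \emph{mixed monomial} polarization $\alpha^{k-1}\cdot\beta^{m-k-1}$ for each $k$ in the range $1\leq k\leq m-1$, including the boundary indices where one of the exponents is zero (so that $\Omega_k$ degenerates to a pure power). I would check that in every such case the classes occupying the $m-1$ positivity slots are genuinely $m$-positive, so that Lemma \ref{lem positive n-1} and Lemma \ref{lem hodge index local} remain available. Once this compatibility is established, the inequality and its equality characterization follow with no further analysis, and I would remark that this Proposition is identical in content to Theorem \ref{KT m-posiitve0} already stated in the introduction.
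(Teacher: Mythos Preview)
Your proposal is correct and follows essentially the same route as the paper. The paper gives no explicit proof of Proposition \ref{KT m-posiitve}; it simply records that Theorem \ref{thrm hyperbolic } (the quadratic form $Q(\gamma)=\gamma^2\cdot\Omega\cdot\omega^{n-m}$ is complete and hyperbolic for any $m$-positive polarization $\Omega$) implies it, and your argument is exactly the unpacking of that implication: choose $\Omega=\Omega_k=\alpha^{k-1}\cdot\beta^{m-k-1}$ and read off the reverse Cauchy--Schwarz inequality together with its equality case. One small caution: Lemma \ref{lem hodge index pos1} as stated only covers the pure-power polarization $\alpha^{m-2}$, so you should cite Theorem \ref{thrm main result} (or Theorem \ref{thrm hyperbolic }) directly rather than that lemma, which you already do in the alternative.
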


\subsection{Miscellaneous}

\subsubsection{A form of mixed Hodge-Riemann bilinear relation} \label{sec HRR}

Let $X$ be a compact K\"ahler manifold of dimension $n$, and let $\omega$ be a K\"ahler metric. Assume that $\alpha_1, ..., \alpha_{m-p-q+1} \in \Gamma_m \subset H^{1,1}(X, \mathbb{R})$. We denote the class of $\omega$ by the same symbol. Denote $\Omega = \alpha_1 \cdot ... \cdot \alpha_{m-p-q}\cdot \omega^{n-m}$.

\begin{defn}
Let $\phi \in H^{p, q} (X, \mathbb{C})$, if $\phi\cdot  \alpha_{m-p-q+1}\cdot \Omega=0$, then we call $\phi$ primitive with respect to $\alpha_{m-p-q+1}\cdot \Omega$. The subspace of primitive $(p, q)$-classes is denoted by $P^{p, q} (X, \mathbb{C})$.
\end{defn}

\begin{defn}
On $H^{p, q} (X, \mathbb{C})$, the quadratic form $Q$ is defined by
\begin{equation*}
Q(\phi, \psi):=i^{q-p} (-1)^{(p+q)(p+q+1)/2} \phi \cdot \overline{\psi} \cdot \Omega.
\end{equation*}
\end{defn}

Motivated by Theorem \ref{thrm main result} and the results in \cite{DN06, cattanimixedHRR}, a more general Hodge-Riemann bilinear relation should hold, i.e., $Q$ is positive definite on $P^{p, q} (X, \mathbb{C})$. For this general relation, the hyperbolic polynomial tools employed in this paper do not work. We intend to discuss it elsewhere.

\subsubsection{Mixed Hessian equations}
As our motivation result, we have noted that a special case of our Hodge-index type inequalities (Corollary \ref{hodge index}) can be derived from complex Hessian equations. We ask whether the general case (Theorem \ref{thrm main result}) also follows from Hessian type equations.
It is related to the following equation, which looks interesting in itself. Let $\widehat{\alpha}, \widehat{\alpha}_1 ,..., \widehat{\alpha}_{m-l}$ be $d$-closed $m$-positive forms with respect to $\omega$, and let $\Phi$ be a positive smooth volume form, then
\begin{equation*}
  (\widehat{\alpha} + dd^c \phi)^l \wedge \widehat{\alpha}_1 \wedge...\wedge \widehat{\alpha}_{m-l} \wedge \omega^{n-m} = c \Phi.
\end{equation*}
should have a solution $\widehat{\alpha} + dd^c \phi$, which is $l$-positive with respect to $\omega$. The real analog of this kind of Hessian equations could also be applied to geometric inequalities.

\bibliography{reference}
\bibliographystyle{amsalpha}

\bigskip

\bigskip

\noindent
\textsc{Tsinghua University, Beijing, China}\\
\noindent
\verb"Email: jianxiao@mail.tsinghua.edu.cn"

\end{document}